\theoremstyle{plain}
\numberwithin{equation}{section}
\newtheorem{theorem}{Theorem}[section]
\newtheorem{proposition}[theorem]{Proposition}
\newtheorem{lemma}[theorem]{Lemma}
\newtheorem{definition}[theorem]{Definition}
\newtheorem{example}[theorem]{Example}
\newtheorem{remark}[theorem]{Remark}
\definecolor{darkred}{rgb}{0.8,0,0}
\definecolor{darkblue}{rgb}{0,0,0.7}
\definecolor{darkgreen}{rgb}{0,0.4,0}
\newcommand{\eps}{\varepsilon}
\newcommand{\R}{{\mathbb R}}
\newcommand{\W}{{\mathcal W}}
\newcommand{\V}{{\mathcal V}}
\newcommand{\un}{{\rm 1\kern -2.5pt l}}
\newcommand{\tr}{{\rm Tr}}
\def\u{\mathbf{u}}
\def\uu{\mathbf{uu}}
\def\vv{\mathbf{v}}
\def\yy{\mathbf{y}}
\def\n{\mathbf{n}}
\def\xx{\mathbf{x}}
\def\aa{\mathbf{a}}
\def\bb{\mathbf{b}}
\def\uu{\mathbf{u}}
\def\vv{\mathbf{v}}
\def\eps{\varepsilon}
\def\R{{\mathbb R}}
\def\M{{\mathcal M}}
\def\H{{\mathcal H}}
\def\eps{\varepsilon}
\def\R{{\mathbb R}}
\def\e{{\mathcal F}}
\def\M{{\mathcal M}}
\def\H{{\mathcal H}}
\def\F{{\mathcal F}}
\def\E{{\mathbb{E}}}
\def\argmin{\mathop{{\rm argmin}}\nolimits}
\def\Tr{\mathop{{\rm Tr}}\nolimits}
\def\dv{\mathop{{\rm div}}\nolimits}
\def\sym{\mathop{{\rm sym}}\nolimits}
\def\u{\mathbf{u}}
\def\v{\mathbf{v}}
\def\e{\mathbf{E}}
\def\z{\mathbf{z}}
\def\v{{\bf v}}
\def\w{{\bf w}}
\def\e{{\bf e}}
\def\x{{\bf x}}
\def\Id{\mathbf{I}}
\def\wconv{\rightharpoonup}
\renewcommand{\epsilon}{\varepsilon}
\newcommand{\beeq}{\begin{equation}}
\newcommand{\eneq}{\end{equation}}
\newcommand{\bear}{\begin{array}}
\newcommand{\enar}{\end{array}}
\newcommand{\bema}{\begin{displaymath}}
\newcommand{\enma}{\end{displaymath}}
\newcommand{\beea}{\begin{eqnarray}}
\newcommand{\enea}{\end{eqnarray}}
\newcommand{\om}{\Omega}
\newcommand{\lab}[1]{ \label{#1} }
\def\Id{\mathbf{I}}
\def\wconv{\rightharpoonup}
\title[]{A new variational approach to 
linearization of\\ traction problems in Elasticity}
   \author[]{Francesco Maddalena, Danilo Percivale, Franco Tomarelli }
 \address{Politecnico di Bari, Dipartimento di Meccanica, Matematica, Management, via Re David 200, 70125 Bari, Italy}
 \email{francesco.maddalena@poliba.it}
 \address{Universit\`{a} degli Studi di Genova, Dipartimento di   Ingegneria Meccanica, Energetica, Gestionale e dei Trasporti (DIME),
  Piazzale Kennedy, Fiera del Mare, Padiglione D, 16129 Genova, Italy}
  \email{percivale@diptem.unige.it}
\address{Politecnico di Milano, Dipartimento di Matematica,  Piazza Leonardo da Vinci 32, 20133 Milano, Italy}
\email{franco.tomarelli@polimi.it}
\date{\today}  \subjclass{}
\begin{document}
 \maketitle
\begin{abstract}
A new energy functional for pure traction problems in elasticity has been deduced in \cite{traction} as the variational limit
of nonlinear elastic energy functional for a material body subject to an equilibrated force field: a sort of Gamma limit with respect to
the weak convergence of strains when a suitable small parameter tends to zero. This functional exhibits a gap that makes it different from the classical linear elasticity functional. Nevertheless a suitable compatibility condition on the force field  ensures coincidence of related minima and minimizers. Here we show some relevant properties of the new functional and prove stronger convergence of minimizing sequences for suitable choices of nonlinear elastic energies.
\end{abstract}
\begin{center}
\large\sffamily DRAFT
\end{center}
\tableofcontents
\begin{flushleft}
  {\bf AMS Classification Numbers (2010):\,} 49J45, 74K30, 74K35, 74R10.\\
  {\bf Key Words:\,} Calculus of Variations, Pure Traction problems, Linear Elasticity, Nonlinear Elasticity, Finite Elasticity,
  Critical points, Gamma-convergence, Asymptotic analysis, nonlinear Neumann problems.\\
\end{flushleft}
\vskip0.5cm
\section{Introduction}
This article is focussed on the properties of the functional
\begin{equation}
\label{DTfuncintro}
\F(\v)\ \,:=\ \,\displaystyle\min_{\mathbf W\in \M^{N\times N}_{skew}}\ \int_\Omega
\mathcal V_0\left( \,\xx,\,\mathbb E(\v) - \textstyle\frac{1}{2}\mathbf W^{2} \,\right)
\, d\xx\ \,-\ \,\mathcal L(\v) \ .
\end{equation}
In \eqref{DTfuncintro} and in the sequel we set: $ N=2,3,$
$\,\M^{N\times N}_{skew}$ denotes the set of skew-symmetric $N\!\times\! N$ real matrices,
$\Omega\subset\R^N$ is a Lipschitz open set representing the reference configuration of an hyperelastic material body undergoing pure traction, $\mathcal V_0(\xx,\cdot)$\,  are uniformly positive definite quadratic forms on square matrices, the vector field $\,\v$ in $H^1(\om,\R^N)$\, denotes a displacement and $\,\E(\v):= \textstyle\frac{1}{2}(\nabla\v^{T}+\nabla\v)$\, denotes the related linearized strain, while $\mathcal L(\v)$ represents the potential energy associated to displacement $\v$,
\begin{equation}
\label{globalequiintro1}
\mathcal L(\v)\ :=\
\int_{\partial\Omega} \mathbf f\cdot\mathbf v\,d\H^{N-1}+\int_{\Omega} \mathbf g\cdot\mathbf v\,d\xx\,, \qquad \
\mathbf f\in L^2(\partial \Omega;\R^N),\ \mathbf g\in L^{2}(\Omega)\,,
\end{equation}
here $\mathbf f$ and $\mathbf g$ are respectively the prescribed boundary and body force fields,
moreover we assume that the total load is equilibrated, say
\begin{equation}
\label{globalequiintro2}
\mathcal L(\z)\ =\
0
\qquad \forall \,\mathbf z \,:\  \mathbb E(\mathbf z)\equiv \mathbf 0\ .
\end{equation}
Motivations for studying functional $\mathcal F$ 
and its minimization over $\v$ in $H^1(\om,\R^N)$ rely on the variational asymptotic analysis developed in \cite{traction}, where we proved that
for pure traction problems in elasticity a gap arises between
the classical linearized elasticity functional $\mathcal  E$,
\begin{equation}
\label{linearelIntro}
{\mathcal  E}(\v):=\int_\Omega \mathcal V_0(\xx,\mathbb E(\v))\,d\x-\mathcal L(\v)\,,
\end{equation}
and
the rigorous variational limit
of nonlinear elastic energy of a material body subject to an equilibrated force field, since
this limit actually is functional $\mathcal F$, provided the load fulfils a suitable compatibility condition (see \eqref{compintro} and Theorem \eqref{mainth1} below).
\vskip0.5cm
The inequality $\mathcal  F (\v)\leq \mathcal  E(\v)$ for every $\v$ is straightforward.
nevertheless the two functionals cannot coincide: indeed
$\mathcal F (\v) =-\mathcal L(\v)< \mathcal E(\v)$ whenever $\v(\xx)=\frac 1 2 \mathbf W^2 \xx$
with
 $\mathbf W\neq \mathbf{0}$ skew symmetric matrix.\\
Notwithstanding this gap, in \cite{traction} we showed that
the two functionals $\mathcal  F $ and $\mathcal E$ have the same minimum and same set of minimizers
 when the loads are equilibrated and compatible (see Theorem \eqref{mainth1} below).
\\
In the case $N=2$ the gap between the two functionals can be better clarified as follows (see 
Remark 2.5 in \cite{traction} for more details ):
\begin{equation}\label{F=E-err}
 \mathcal F(\vv) \ = \ \mathcal E(\vv)
-\frac{1}{4}\left (\int_{\om}\mathcal V_0(\xx,\mathbf I)
d\xx\right )^{\!-1}\left [\left (\int_{\om}D\mathcal V_0(\xx,\mathbf I)\!\cdot\!\mathbb E(\v)
\,d\xx\right)^{\!-}\right ]^{2} ,
\end{equation}
where $\alpha^-=\max(-\alpha,0)$, thus
\vskip-0.4cm
\begin{equation*}
\F(\v)=\mathcal  E(\v) \qquad \hbox{if }N=2 \quad \hbox{and}\quad \int_{\om}D\mathcal V_{0}(x,\mathbf I)\cdot \mathbb E(\v)\,dx\ge 0\,.
\end{equation*}
Even more explicitly, if $N=2$, $\,\lambda,\, \mu> 0\,$ and
\begin{equation}\lab{GSV1}
\W(\xx,\mathbf F)=\left\{\begin{array}{ll}
 \mu |\mathbf F^{T}\mathbf F-\mathbf I |^{2}+\frac{\lambda}{2}|\ \hbox{\rm Tr}\ (\mathbf F^{T}\mathbf F-\mathbf I)|^{2}
 \quad & \hbox{if} \ \det \mathbf F>0,
 \vspace{0.2cm}\\
 +\infty\quad & \hbox{ otherwise,}\\
 \end{array}\right.
\end{equation}
then $\mathcal V_0(\xx,\mathbf B)\,=\,4\mu|\mathbf B|^{2}+2\lambda|\hbox{Tr}\mathbf B|^{2}$ and we get
\begin{equation}\label{astarGSV}
a_{*}^{2}(\v)\ =\ |{\om}|^{-1}\left (\int_{\om}\hbox{\rm div}\,\v\,d\xx\right)^{\!-}\ ;
\end{equation}
such evaluation in 2D approximately means that
for every displacement $\v$ such that the associated deformed configuration $
\yy (\Omega)$ is greater than the area of $\om$,
the global energy $\mathcal F(\v)$
provided by new functional $\mathcal F$
is the same as the one provided by classical linearized elasticity, say $ \mathcal E(\v)$.
\vskip0.3cm
The rigorous derivation of the variational theory of linear elasticity  (\cite{Gu}) from the theory of finite elasticity  (\cite{Lo},\cite{TN}) was achieved in \cite{DMPN} through arguments based on De Giorgi $\Gamma-$ convergence
theory, thus providing a mathematical justification of the classical elasticity in small deformations regime, at least for \textsl{Dirichlet or mixed boundary value problem}.
\\
In a more recent paper (\cite{traction}) we have focussed the analysis on the analogous
variational question related to Neumann type condition, say the \textsl{pure traction problem in elasticity}:  the case where the elastic body is subject to a system of equilibrated forces and no Dirichlet condition is assigned on the boundary.
\\
Referring to the open set $\Omega\subset\R^N,\ N=2,3,$ as the reference configuration of an hyperelastic material body, the stored energy due to a deformation  $\mathbf y$ can be  expressed as
a functional of the deformation gradient $\nabla\yy$ as follows
\begin{equation*}
\int_\Omega\mathcal \,\mathcal W(\xx, \nabla\yy)\,d\xx
\end{equation*}
where  $ \W:\om \times\! \M^{N \times N}\!\to \! [0, +\infty ]$ is a frame indifferent function, $\M^{N\times N}$ is the set of real $N\times N$ matrices and
$\mathcal W(\xx, \mathbf F)<+\infty$ if and only if $\det \mathbf F >0$.\\ Then due to frame indifference there exists a function $\mathcal V$ such that
\begin{equation*}
\W(\xx,\mathbf F)=\V(\xx,\textstyle{\frac{1}{2}}( \mathbf F^T \mathbf F - \mathbf I))\,,
\qquad
\ \forall\, \mathbf F\in \M^{N\times N},\  \hbox{ a.e. }\xx\in \om.
\end{equation*}
We set $\mathbf F=\Id +h\mathbf B$, where $h> 0$ is an adimensional small parameter and
\begin{equation*}
\mathcal V_{h}(\xx,\mathbf B):= h^{-2}\mathcal W(\xx,\Id+h\mathbf B).
\end{equation*}
We assume that the reference configuration has zero energy and is stress free, i.e. $$\W(\xx,\mathbf I)=0,
\quad D \W(\xx,\mathbf I)=\mathbf 0 \quad \hbox{for a.e. }\xx \in \om \,,$$ and that $\W$ is regular enough in the second variable,
then Taylor's formula entails
\begin{equation*}
\mathcal V_{h}(\xx,\mathbf B)=\mathcal V_{0}(\xx, \sym \mathbf B) +o(1) \qquad \hbox{ as } h \to 0_+
\end{equation*}
where $\sym \mathbf B:=\frac{1}{2}(\mathbf B^{T}+\mathbf B)$ and
\begin{equation*}
\mathcal V_{0}(\xx, \sym \mathbf B):=\frac{1}{2} \sym \mathbf B\, D^{2}\mathcal V(\xx,\mathbf 0) \,\sym \mathbf B.
\end{equation*}
If the deformation $\yy$ is close to the identity up to a small displacement, say $\mathbf y(\xx)= \xx+h\mathbf \v(\xx)$ with
bounded $\nabla \v\,$ then,
by setting $\,\E(\v):= \textstyle\frac{1}{2}(\nabla\v^{T}+\nabla\v)$\,, one easily obtains

\begin{equation}
\lab{linpunt}
\lim_{h\to 0}\int_{\om}\mathcal V_{h}(\xx,\nabla\v)\,d\xx= \int_{\om}\mathcal V_{0}(\xx,\E(\v))\,d\xx
\end{equation}
Right hand side in \eqref{linpunt} represents the classical linear elastic deformation energy and such a limit was retained to establish a reasonable justification
of linearized elasticity.
However in \cite{DMPN} it is proved by $\Gamma$-convergence techniques that, under standard structural conditions on $\mathcal W$, actually the linear elastic problem is achieved in the limit by exploiting the weak convergence of  $H^1(\om,\R^N)$,
in case of Dirichlet or mixed boundary condition.
\vskip0.2cm
The variational limit is different 
when no Dirichlet boundary condition is present, as we outline briefly here:
in \cite{traction} we studied the case of Neumann boundary conditions,     that is pure traction problem in elasticiy, 
by considering the sequence of energy functionals
\begin{equation}
\displaystyle \F_h(\v)\ =\
\displaystyle \int_\Omega\mathcal V_{h}(\xx, \nabla \v)d\xx-\mathcal L(\v)\,, \qquad \ \v\,\in\, H^1(\om,\R^N)\,,
 \end{equation}
and we we inquired whether  the asymptotic relationship $\F_{h}(\v_{h})=\inf \F_{h}+o(1)$
as $h\to 0_+$ implies, up to subsequences, some kind of
weak convergence of  $\v_{h}$ to a minimizer $\v_{0}$ of a suitable limit functional
in $H^{1}(\om;\mathbf R^{N})$; to this aim next example is highly explicative:
assume
\begin{equation}
\lab{WquadIntrod}
\W(\xx,\mathbf F)=\left\{\begin{array}{ll}
 |\mathbf F^{T}\mathbf F-\Id|^{2}\  &\hbox{if} \ \det \mathbf F>0\\
\vspace{0.1cm}
&\\
 +\infty\ &\hbox{otherwise,}\\
 \end{array}\right.
\end{equation}
$\mathbf g\equiv \mathbf f\equiv \mathbf 0\,$, hence $\inf \mathcal F_{h}= 0$ for every $h>0$,
then by choosing
a fixed nontrivial $N\times N$ skew-symmetric matrix $\mathbf W$, a real number $0< 2\alpha < 1$
and setting
\begin{equation}
\z_{h}:=h^{-\alpha}\,\mathbf W
\,\x \,,
\end{equation}
we get  $\ \mathcal F_{h}(\z_{h})= \inf \mathcal F_{h}+o(1)$, though
$\z_{h}$  has no subsequence
weakly converging in  $H^{1}(\om;\mathbf R^{N})$.
\\
Therefore in contrast to
\cite{DMPN},
one cannot expect weak $H^{1}(\om;\mathbf R^{N})$ compactness of minimizing sequences for pure traction problem,
not even in the simplest case of null external forces: 
we emphasize that in nonlinear elasticity this difficulty cannot be easily circumvented in general
by standard translations since $\F_{h}(\v_{h})\!\not=\! \F_{h}(\v_{h}-\mathbb P\v_{h})$,
with $\mathbb P$ projection on infinitesimal rigid displacements.
Nevertheless, we will show in Theorem~\ref{strongconvmin} below that,
 at least for some special $\mathcal W$, if $ \mathcal F_{h}(\v_{h})= \inf \mathcal F_{h}+o(1)$ then up to subsequences
$\mathcal F_{h}(\v_{h}-\mathbb P\v_{h})= \inf \mathcal F_{h}+o(1)$.
\\
For this reason, we exploited a much weaker topology: the
weak $L^{2}(\om;\mathbf R^{N})$ convergence of linear strains.
Since such convergence  does not imply an analogous convergence of the skew symmetric part of the gradient of displacements, one may expect that the $\Gamma\,$limit functional is different from the  point-wise limit of
$\mathcal F_{h}$, as actually is the case.
\\
Under some natural assumptions on $\mathcal W$, a careful application of the Rigidity Lemma of  \cite{FJM}
together with a suitable tuning of asymptotic analysis with Euler-Rodrigues formula for rotations
show that, if $\mathbb E(\v_{h})$ are bounded in $L^{2}$, then up to subsequences $\sqrt h\,\nabla \v_{h}$ converges strongly in $L^{2}$ to a constant skew symmetric matrix and the variational limit of the sequence $\mathcal F_{h}$, with respect to the w-$L^{2}$ convergence of linear strains, turns out to be
the functional $\mathcal F$ defined in \eqref{DTfuncintro}:
in \cite{traction} it is proved that
if loads are equilibrated and fulfil the compatibility condition
\beeq
\lab{compintro}
\int_{\partial\Omega}\!\!\mathbf f\cdot\mathbf W^{2}\x\, d\H^{N-1}
\!+\!
\int_{\Omega}\!\mathbf g\cdot\mathbf W^{2}\x\, d\xx < 0 \qquad\forall\,
\hbox{skew symmetric matrix } \mathbf W\!\not=\! \mathbf 0
\eneq
then  pure traction problem in linear elasticity is rigorously deduced via $\Gamma$-convergence from
the corresponding pure traction problem formulated in nonlinear elasticity,
referring to weak $L^2$ convergence of the linear strains; moreover
minimizers of $\F$ coincide with the ones of of linearized elasticity functional
$\mathcal E$; thus providing a complete variational justification of  pure traction problems in linear elasticity at least if \eqref{compintro} is satisfied.
In particular, as it is shown in Remark 2.8,  this is true when $\mathbf g\equiv 0,\ \mathbf f=f\n$ with $f>0$ and $\n$ is the outer unit normal vector  to $\partial\om,$ that is when we are in presence of tension-like surface forces.
%
\vskip0.5cm
In the present paper we prove some relevant properties concerning the structure of the new functional and improve its variational connection for a particular but significant class of nonlinear energies.\\
In section \ref{sect further prop of F} we prove that
$\mathcal F$ is sequentially lower semicontinuous weak respect to the natural but very weak notion of convergence, e.g. weak\,$L^2$ of linearized strains
(see Proposition \eqref{Fwlsc}),
though
$\mathcal F$ exhibits a kind of "nonlocal" behavior (see Remark \ref{rmknonlocal}).\\
In the 2D case we can prove that $\F$ is a convex functional for every choice of the positive definite quadratic form $\mathcal V_0$ or,
equivalently, for the variational limit of every nonlinear stored energy $\mathcal W$ fulfilling structural assumptions of general kind in the theory of elasticity: this is shown by making explicit
its first variation and showing that the second variation cannot be negative (see \eqref{firstvarF} and Proposition \ref{convF}).
\\ On the other hand in the 3D case the functional $\F$ cannot be convex for
whatever choice of the positive definite quadratic form $\mathcal V_0$ or, equivalently for every nonlinear stored energy $\mathcal W$
fulfilling the standard structural assumptions: see Proposition \ref{nonconvF} and the general counterexample to convexity therein.
\\
The dichotomy above relies on the fact that there exist pairs of skew-symmetric matrices $\mathbf W_1,\mathbf W_2 \in \mathcal M^{3\times 3}_{skew}$ such that
$\mathbf W_1^2+\mathbf W_2^2$ is not the square of any skew-symmetric matrix: e.g. see \eqref{N=3contrextoconvexity}; while in the 2D
case the matrix $\mathbf W^2$ is a nonpositive multiple of the identity for every skew-symmetric matrix $\mathbf W$.
\\
Notice that $\mathcal F$ is not subadditive: indeed already in dimension $N=2$ formula \eqref{F=E-err} shows that functional $\mathcal F$ cannot be subadditive
on disjoint sets.
\\
In Section \ref{sectionPrelRes}
for reader's convenience we summarize and comment preliminary main results of \cite{traction}
about the variational convergence of pure traction problems.
\\
Eventually, in Section \ref{section strong conv}
we refine the convergence properties for \textsl{minimizing sequences  of the sequence of functionals} $\F_h$ (e.g $\F(\v_{h})=\inf\F_{h}+o(1)$): if $\mathcal W$
is the {\it Green-St.Venant} energy density \eqref{GSV1} then we show by Theorem \ref{strongconvmin} that there exist subsequences
of functionals $\mathcal F_{h}$ and of related minimizing sequence $\v_{h}$, such that (without relabeling) $\v_{h}\!-\!
\mathbb P\v_{h}$ converges weakly in $H^{1}(\om;\mathbf R^{N})$ and strongly in $W^{1,q} (\om,\R^N)\  (1\leq q <2) $ to a minimizer of $\mathcal F$, provided both \eqref{globalequiintro2} and \eqref{compintro} hold true.\\
On the other hand, if inequality in \eqref{compintro} is fulfilled only in a weak sense
by the collection of skew symmetric matrices, then still $\argmin \F$ contains $\argmin \mathcal E $ and  $ \min \mathcal F=\min \mathcal E$,  but $\F$
may have infinitely many minimizing critical points which are not minimizers of $\mathcal E$.
\\
Therefore, only two cases are allowed: either $\min \mathcal F=\min \mathcal E$ or $\inf \F=-\infty$;
actually the second case arises in presence of compressive surface load.
\\
We mention several contributions facing issues in elasticity which are strictly connected with the context of present paper:
\cite{ADMDS},\,\cite{ADMLP}, \cite{ABP},\,\cite{AP}\,\cite{BBGT},\,\cite{BD},\,\cite{BT},\,\cite{CLT},\,\cite{LM}\,\cite{MPT},
\!\cite{MPT2},\,\cite{MPTFvK},\,\cite{PT1},\,\cite{PTplate},\,\cite{PT2},\,\cite{PT4}.
%
%
%

\section{Structural properties of functional $\mathcal F$}
\label{sect further prop of F}
In this section we develop further the analysis of structural properties
of functional $\F$ defined by \eqref{DTfuncintro}, focussing mainly on convexity and semicontinuity issues.\vskip0.1cm
All along the paper we assume that the reference configuration of the elastic body is a
\begin{equation}\label{OMEGA}
\hbox{ bounded, connected open set } \Omega \subset \R^N \hbox{ with Lipschitz boundary, }  \ N= 2, 3,\quad
\end{equation}
and set these notations:
the generic point $\xx\in \Omega$ has components $x_j$ referring to the standard basis vectors
${\mathbf e}_j$ in $\R^N$;
${\mathcal L}^N$ and ${\mathcal B}^N$ denote respectively the
$\sigma\mbox{-algebras}$
of Lebesgue measurable and Borel measurable subsets of $\R^N$.
\vskip0.1cm
The notation for vectors $\aa,\,\bb\in\R^N$ and $N \!\times \!N$ real matrices $\mathbf A,\,\mathbf B,\, \mathbf F $ are as
follows: $\aa\cdot\bb=\sum_j\aa_j\bb_j\,;$
$\mathbf A\cdot\mathbf B=\sum_{i,j}\mathbf A_{i,j}\mathbf B_{i,j}\,;$
$[\mathbf A\mathbf B]_{i,j}=\sum_{k}\mathbf A_{i,k}\mathbf B_{k,j}\,;$
$| \mathbf F|^2 =\hbox{Tr}(\mathbf F^{T}\mathbf F)=\sum_{i,j}F_{i,j}^2$ denotes the squared Euclidean norm of $\mathbf{F}$ in the space $ \M^{N \times N}$ of $N\!\times\! N$ real matrices;
$\mathbf I \in \M^{N \times N}$ denotes the identity matrix,
$SO(N)$ denotes the group of rotation matrices, $\M^{N\times N}_{sym}$ and $\M^{N\times N}_{skew}$ denote respectively the sets of symmetric and skew-symmetric matrices.
For every $\mathbf B\in \M^{N \times N}$ we define ${\rm sym\,}\mathbf B:=\frac{1}{2}(\mathbf B+\mathbf B^T)$
and  ${\rm skew\,}\mathbf B:=\frac{1}{2}(\mathbf B-\mathbf B^T)$.
\vskip0.1cm
First we recall that the minimum at right-hand side in definition \eqref{DTfuncintro} of $\mathcal F$ exists
for every $\vv$ in $H^1(\om,\R^N)$, so that $\mathcal F(\vv)$ is well defined: precisely the finite dimensional minimization problem has exactly two solutions which differs only by the sign, since
strict convexity of the positive definite quadratic form ${\mathcal V_0}(\xx,\cdot)$ entails
\begin{equation}
\displaystyle\lim_{|\mathbf W| \to +\infty,\, \mathbf W\in \M^{N\times N}_{skew}}\int_\Omega
{\mathcal V_0}\left(\xx,\mathbb E(\v)-\textstyle\frac{1}{2}\mathbf W^{2}\right)
\, d\xx\ =\ +\infty\,
\end{equation}
and hence the existence of a unique minimizer $\mathbf W^{2}$.
%
\begin{proposition}\lab{convF} If $N=2$ then functional $\F$ is convex for every choice of the positive definite quadratic form $\mathcal V_0$.
\end{proposition}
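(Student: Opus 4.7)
My plan exploits a structural feature specific to $N=2$: every skew--symmetric $2\times 2$ matrix $\mathbf W$ satisfies $\mathbf W^2=-\tfrac12|\mathbf W|^2\mathbf I$, so the minimization over $\mathbf W\in\mathcal M^{2\times 2}_{skew}$ in \eqref{DTfuncintro} reduces to a one--parameter minimization over the scalar $t:=|\mathbf W|^2/4\in[0,+\infty)$:
\begin{equation*}
\mathcal F(\vv) \ =\ \min_{t\ge 0}\int_\Omega\mathcal V_0\bigl(\xx,\mathbb E(\vv)+t\mathbf I\bigr)\,d\xx \ -\ \mathcal L(\vv).
\end{equation*}
I would then recognize $\mathcal F+\mathcal L$ as the partial minimization of a jointly convex functional of $(\vv,t)$, and conclude convexity of $\mathcal F$ by the standard marginal principle.

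Concretely, introduce
\begin{equation*}
Q(\vv,t) \ :=\ \int_\Omega \mathcal V_0\bigl(\xx,\mathbb E(\vv)+t\mathbf I\bigr)\,d\xx, \qquad (\vv,t)\in H^1(\Omega;\R^N)\times[0,+\infty),
\end{equation*}
and check that $Q$ is jointly convex. This is immediate: the map $(\vv,t)\mapsto\mathbb E(\vv)+t\mathbf I$ is linear from $H^1(\Omega;\R^N)\times\R$ into $\mathcal M^{N\times N}_{sym}$, for every $\xx\in\Omega$ the positive definite quadratic form $\mathbf B\mapsto\mathcal V_0(\xx,\mathbf B)$ is convex on $\mathcal M^{N\times N}_{sym}$, and composition with a linear map together with integration over $\Omega$ preserve convexity.

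To finish I would invoke the usual partial minimization argument over the convex set $[0,+\infty)$. For $\lambda\in(0,1)$, $\vv_1,\vv_2\in H^1(\Omega;\R^N)$ and near--minimizers $t_i\ge 0$ of $Q(\vv_i,\cdot)$, the convex combination $\lambda t_1+(1-\lambda)t_2\ge 0$ is admissible and joint convexity of $Q$ yields
\begin{equation*}
\inf_{t\ge 0}Q\bigl(\lambda\vv_1+(1-\lambda)\vv_2,t\bigr) \ \le\ \lambda Q(\vv_1,t_1)+(1-\lambda)Q(\vv_2,t_2);
\end{equation*}
passing to the infimum in $t_1,t_2$ gives convexity of the marginal $g(\vv):=\inf_{t\ge 0}Q(\vv,t)$. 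Since $-\mathcal L$ is affine, $\mathcal F=g-\mathcal L$ is convex, which is the claim.

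I do not foresee a real obstacle: the whole argument hinges on the dimension--$2$ algebraic identity $\mathbf W^2=-\tfrac12|\mathbf W|^2\mathbf I$, whose failure in $N=3$ is precisely what opens the door to the counterexample announced in Proposition \ref{nonconvF}. An alternative, more computational route would be to start from the explicit representation \eqref{F=E-err}, namely $\mathcal F(\vv)=\mathcal E(\vv)-\tfrac{1}{4c}[\Phi(\vv)^-]^2$ with $c=\int_\Omega\mathcal V_0(\xx,\mathbf I)\,d\xx$ and $\Phi(\vv)=\int_\Omega D\mathcal V_0(\xx,\mathbf I)\cdot\mathbb E(\vv)\,d\xx$, and to verify convexity by computing the first and second G\^ateaux variations directly; but that path only unwinds the partial minimization argument with more bookkeeping.
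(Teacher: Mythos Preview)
Your argument is correct and is genuinely simpler than the paper's. The key observation---that in dimension $2$ one has $\mathbf W^{2}=-\tfrac12|\mathbf W|^{2}\mathbf I$, reducing the matrix minimization to a scalar one over $t\ge 0$---is exactly what makes the two--dimensional case special, and once that reduction is made, the partial--minimization principle (the marginal of a jointly convex function over a convex set is convex) finishes the job cleanly. All the ingredients you invoke are standard: $\mathcal V_{0}(\xx,\cdot)$ is convex as a positive definite quadratic form, the map $(\vv,t)\mapsto\mathbb E(\vv)+t\mathbf I$ is linear, and $\mathcal L$ is linear.

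The paper proceeds instead along the computational route you sketch at the end: it starts from the explicit representation \eqref{F=E-err}, regularizes the non--smooth term $[(\cdot)^{-}]^{2}$ by a family of $C^{2}$ functions $\varphi_{\eps}$, writes $\mathcal F=\sup_{\eps>0}\mathcal F_{\eps}$ with each $\mathcal F_{\eps}$ of class $C^{2}$, and then checks by hand that the second variation of every $\mathcal F_{\eps}$ is nonnegative (the estimate ultimately reduces back to the inequality $\mathcal F(\vv)+\mathcal L(\vv)\ge 0$). Your approach bypasses the smoothing and the second--variation calculus entirely, at the price of not producing the explicit first variation \eqref{firstvarF}; the paper's approach is longer but yields those formulas as a byproduct and makes contact with Remark~\ref{variat}.
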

\begin{proof}For every $\eps>0$ we define $\varphi_{\eps}\in C^{2}(\mathbb R)$ as
\begin{equation}\lab{phieps}
\varphi_{\eps}(t)=
\left\{
\begin{array}{ll}
t^{2}-\eps t+\frac{\eps^{2}}{3} \ &\hbox{if}\  t\le 0
\vspace{0.1cm}
\\
(3\eps)^{-1}(\eps-t)^{3}\ & \hbox{if}\  0\le t\le \eps
\vspace{0.1cm}
\\
 0\ & \hbox{otherwise}
\end{array}\right.
\end{equation}
and introduce the $C^2$ functionals $\mathcal F_{\eps}$
 by setting
 \begin{equation}\lab{Feps}
\displaystyle \mathcal F_{\eps}(\v)\ =\ \mathcal E(\v) - \,\frac{1}{4}\!\left (\int_{\om}\mathcal V_0(\xx,\mathbf I)
d\xx\right )^{\!\!-1} \!\!\varphi_{\eps}\!\left(\int_{\om}D\mathcal V_0(\xx,\mathbf I)\!\cdot\!\mathbb E(\v)
\,d\xx\right)
\quad\forall\, \v\in H^{1}(\om,\mathbb R^{N})\,.
\end{equation}
Then
by \eqref{phieps}, \eqref{Feps} and representation 
\begin{equation*}
 \mathcal F(\vv) \ = \ \mathcal E(\vv)
-\frac{1}{4}\left (\int_{\om}\mathcal V_0(\xx,\mathbf I)
d\xx\right )^{\!-1}\left [\left (\int_{\om}D\mathcal V_0(\xx,\mathbf I)\!\cdot\!\mathbb E(\v)
\,d\xx\right)^{\!-}\right ]^{2} ,
\end{equation*}
we get $$ \mathcal F_{\eps}\le \F \ ,\qquad \ \F= \sup_{\eps>0} \F_{\eps}\ .$$
Moreover we claim that $\F_{\eps}$ is convex for every $\eps>0$ and this property entails the convexity of $\F$ since $\F$ is the supremum of a family of convex functions. \\
Indeed $\F_{\eps}$ is a $C^{2}$ functional on the whole space $H^{1}(\om,\mathbb R^{N})$ therefore
its second variation,
for every $\uu,\v\in H^{1}(\om,\mathbb R^{N})$, is
\begin{equation}\begin{array}{ll}\lab{secvar}
&\displaystyle\v^{T}\delta^{2}\!\F_{\eps}(\uu)\v= \v^{T}\delta^{2}\mathcal E(\uu)\v\\
&\\
&\displaystyle
- \frac{1}{4}\left (\int_{\om}\mathcal V_0(\xx,\mathbf I)
d\xx\right )^{\!-1}\varphi''_{\eps}\left(\int_{\om}D\mathcal V_0(\xx,\mathbf I)\!\cdot\!\mathbb E(\uu)
\,d\xx\right)\left(\int_{\om}\!
 D \mathcal V_0(\xx,\mathbf I)\!\cdot\!\mathbb E(\v)\right)^{2}=\\
 &\\
 &\displaystyle=2\int_{\om}\mathcal V_{0}(\xx, \mathbb E(\v))\,d\xx-\\
 &\\
 &\displaystyle-\frac{1}{4}\left (\int_{\om}\mathcal V_0(\xx,\mathbf I)
d\xx\right )^{\!-1}\varphi''_{\eps}\left(\int_{\om}D\mathcal V_0(\xx,\mathbf I)\!\cdot\!\mathbb E(\uu)
\,d\xx\right)\left(\int_{\om}\!
 D \mathcal V_0(\xx,\mathbf I)\!\cdot\!\mathbb E(\v)\right)^{2}\,.
 \end{array}
\end{equation}
By taking into account that $0\le \varphi''_{\eps}\le 2$ we get
\begin{equation}\lab{secvarin}
\displaystyle\v^{T}\delta^{2}\F_{\eps}(\uu)\v\ge 2\int_{\om}\mathcal V_{0}(\xx, \mathbb E(\v))\,d\xx-\frac{1}{2}\left (\int_{\om}\mathcal V_0(\xx,\mathbf I)
d\xx\right )^{\!-1}\left(\int_{\om}\!
 D \mathcal V_0(\xx,\mathbf I)\!\cdot\!\mathbb E(\v)\right)^{2}\,.
\end{equation}
Hence, representation \eqref{F=E-err} entails that the right hand side of \eqref{secvarin} is
$2\big(\F(\v)+\mathcal L(\v)\big)$ if
$$\int_{\om}\!
 D \mathcal V_0(\xx,\mathbf I)\!\cdot\!\mathbb E(\v)\ge 0$$
and is $\F(-\v)+\mathcal L(-\v)$ else. Therefore in both cases  \eqref{DTfuncintro} entails $\v^{T}\delta^{2}\F_{\eps}(\uu)\v\ge 0$ for every $\uu,\,\v\in H^{1}(\om,\mathbb R^{N})$. Therefore $\F_{\eps}$ is convex and claim is proved.
\end{proof}
\begin{proposition}\lab{nonconvF} If $N=3$ then functional $\F$ is nonconvex for every choice of the positive definite quadratic form $\mathcal V_0$.
\end{proposition}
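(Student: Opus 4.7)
The plan is to exploit the algebraic peculiarity of $3\!\times\! 3$ skew matrices highlighted just after the statement: in dimension two the image of $\mathbf W\mapsto\mathbf W^2$ on $\mathcal M^{2\times 2}_{skew}$ is a single ray (every such square equals $-\tfrac12|\mathbf W|^2\mathbf I$, trivially closed under sums), while in dimension three the cone $\{\mathbf W^2:\mathbf W\in\mathcal M^{3\times 3}_{skew}\}$ is \emph{not} closed under addition. Since $\mathcal L$ is affine on $H^1(\Omega,\mathbb R^N)$, $\mathcal F$ is convex if and only if the quadratic part
\[
G(\mathbf v)\,:=\,\mathcal F(\mathbf v)+\mathcal L(\mathbf v)\,=\,\min_{\mathbf W\in\mathcal M^{3\times 3}_{skew}}\int_\Omega \mathcal V_0\bigl(\mathbf x,\mathbb E(\mathbf v)-\tfrac12\mathbf W^2\bigr)\,d\mathbf x
\]
is convex, so I would reduce the problem to showing that $G$ violates the midpoint inequality for a suitable pair of displacements; in particular the value of the load plays no role in the argument.

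The counterexample is built from a pair $\mathbf W_1,\mathbf W_2\in\mathcal M^{3\times 3}_{skew}$ (for instance the matrices in \eqref{N=3contrextoconvexity}, or the infinitesimal generators of rotations about two orthogonal axes) chosen so that $\mathbf W_1^2+\mathbf W_2^2$ is not of the form $2\mathbf W^2$ for any skew $\mathbf W$. The algebraic verification rests on a spectral obstruction: every nonzero $\mathbf W\in\mathcal M^{3\times 3}_{skew}$ has a one-dimensional kernel, hence $\mathbf W^2$ is singular with a double negative eigenvalue; by contrast, an appropriate $\mathbf W_1^2+\mathbf W_2^2$ has all three eigenvalues strictly negative, ruling out membership in the cone.

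I then test $G$ on the linear displacements $\mathbf v_i(\mathbf x):=\tfrac12\mathbf W_i^2\mathbf x$. Since $\mathbf W_i^2$ is symmetric, $\nabla\mathbf v_i$ is symmetric and $\mathbb E(\mathbf v_i)=\tfrac12\mathbf W_i^2$, so selecting $\mathbf W=\mathbf W_i$ in the minimum yields $G(\mathbf v_i)=0$; nonnegativity of $\mathcal V_0$ forces $G(\mathbf v_1)=G(\mathbf v_2)=0$. At the midpoint $\mathbf v:=\tfrac12(\mathbf v_1+\mathbf v_2)$ one has $\mathbb E(\mathbf v)=\tfrac14(\mathbf W_1^2+\mathbf W_2^2)$, and by the choice of $\mathbf W_1,\mathbf W_2$ the matrix $\tfrac14(\mathbf W_1^2+\mathbf W_2^2)-\tfrac12\mathbf W^2$ is nonzero for every skew $\mathbf W$. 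Uniform positive definiteness of $\mathcal V_0(\mathbf x,\cdot)$ then gives $G(\mathbf v)>0$, which contradicts the midpoint inequality $G(\mathbf v)\le\tfrac12(G(\mathbf v_1)+G(\mathbf v_2))=0$.

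The main obstacle is purely algebraic: identifying a pair of skew generators whose squared sum escapes the cone $\{2\mathbf W^2:\mathbf W\in\mathcal M^{3\times 3}_{skew}\}$. Once the spectral obstruction is pinned down, every remaining step is a routine computation, and the argument is manifestly independent of the particular positive definite $\mathcal V_0$, yielding the stated universality over all admissible nonlinear stored energies.
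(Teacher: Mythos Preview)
Your proposal is correct and follows essentially the same route as the paper: both test midpoint convexity on linear displacements built from the squares of two skew matrices (you even cite the paper's explicit pair \eqref{N=3contrextoconvexity}), and both conclude via the fact that the half-sum of those squares lies outside the cone $\{\mathbf W^2:\mathbf W\in\mathcal M^{3\times 3}_{skew}\}$. The only differences are cosmetic: you strip off the affine part $\mathcal L$ and work with $G=\mathcal F+\mathcal L$, you carry the factor $\tfrac12$ in the test displacements $\mathbf v_i$, and you supply an explicit spectral reason (the rank-two structure of $\mathbf W^2$ versus the negative definiteness of $\mathbf W_1^2+\mathbf W_2^2$) for the algebraic obstruction that the paper merely asserts.
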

\begin{proof}
Set
\begin{equation}
\label{N=3contrextoconvexity}
\mathbf W_{1}=\e_{1}\otimes\e_{2}-\e_{2}\otimes\e_{1},\ \mathbf W_{2}=\e_{2}\otimes\e_{3}-\e_{3}\otimes\e_{2}.
\end{equation}
Then
$$ \frac{1}{2}(\mathbf W_{1}^{2}+\mathbf W_{2}^{2})=-\frac{1}{2}(\e_{1}\otimes\e_{1}+\e_{3}\otimes\e_{3})-\e_{2}\otimes\e_{2}:= \mathbf A$$
and by choosing
$\v(\xx):= \mathbf A\xx$ we get  $\mathbb E(\v)=\mathbf A\not\in\{\mathbf W^{2}: \mathbf W\in \M^{N\times N}_{skew}\}$. \\ Hence, due to \eqref{Z3}, $\F(\v)>-\mathcal L(\v)$ for every  choice of $\mathcal V_{0}$. By setting
  $$\v_{1}(\xx):=\mathbf W_{1}^{2}\xx, \ \v_{2}(\xx):=\mathbf W_{2}^{2}\xx$$
   we get  $\F(\v_{1})=-\mathcal L(\v_{1}),\ \F(\v_{2})=-\mathcal L(\v_{2})$ hence
$$\textstyle\F(\frac{1}{2}(\v_{1}+\v_{2}))=\F(\v)>-\mathcal L(\v)=-\frac{1}{2}(\mathcal L(\v_{1})+\mathcal L(\v_{2}))=\frac{1}{2}(\F(\v_{1})+\F(\v_{2}))$$
thus proving that $\F$ is not convex in the 3D case for every choice of $\mathcal V_{0}$.
\end{proof}
Although existence of minimizers of $\F$ is already a direct consequence of convergence results in [23],
in the next Proposition we provide a direct proof of
sequential lower semicontinuity of $\F$ with respect to the natural, very weak convergence,
for both cases of dimension $2$ and $3$.
\begin{proposition} \label{Fwlsc}
Assume that the \textsl{standard structural conditions} and
\eqref{globalequiintro2} holds true.\\ Then
for every $\v_{n},\v\in H^1(\Omega;\R^N)$ such that
$\mathbb E(\v_n)\wconv\mathbb E(\v)$ in $L^2(\Omega;\M^{N\times N})$ we have
$$\liminf_{n\rightarrow +\infty} \mathcal F(\v_n)\ \geq \  \mathcal F(\v)$$
\end{proposition}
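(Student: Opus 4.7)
The plan is to combine the finite-dimensional minimization in the definition of $\mathcal F$ with the weak lower semicontinuity of the convex quadratic integral $\mathbf B\mapsto \int_\Omega \mathcal V_0(\xx,\mathbf B)\,d\xx$, after reducing to $H^1$-bounded displacements via Korn's inequality and the equilibrium assumption \eqref{globalequiintro2}.

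First I would assume $\liminf_n \mathcal F(\v_n)<+\infty$ (otherwise nothing to prove) and pass to a subsequence realizing this liminf. Since $\mathbb E(\v_n)\rightharpoonup \mathbb E(\v)$ in $L^2$, the sequence $\mathbb E(\v_n)$ is $L^2$-bounded. Let $\mathbb P\v_n$ denote the $L^2$-projection of $\v_n$ onto the finite-dimensional space of infinitesimal rigid displacements $\mathcal R:=\{\z:\mathbb E(\z)\equiv \mathbf 0\}$. Equilibrium \eqref{globalequiintro2} gives $\mathcal L(\mathbb P\v_n)=0$ and clearly $\mathbb E(\mathbb P\v_n)=\mathbf 0$, so $\mathcal F(\v_n)=\mathcal F(\v_n-\mathbb P\v_n)$. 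By Korn's second inequality, $\v_n-\mathbb P\v_n$ is bounded in $H^1(\Omega;\R^N)$; thus, after extracting a further subsequence, $\v_n-\mathbb P\v_n\rightharpoonup \tilde \v$ in $H^1$ with $\mathbb E(\tilde\v)=\mathbb E(\v)$, hence $\tilde\v-\v\in\mathcal R$ and $\mathcal F(\tilde\v)=\mathcal F(\v)$. So without loss of generality $\v_n\rightharpoonup \v$ in $H^1$, which yields in particular $\mathcal L(\v_n)\to \mathcal L(\v)$ by continuity of the trace operator and of the body-force integral.

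Next, for each $n$ select $\mathbf W_n\in \M^{N\times N}_{skew}$ attaining the minimum in \eqref{DTfuncintro}, so that
\[
\int_\Omega \mathcal V_0\!\left(\xx,\mathbb E(\v_n)-\tfrac{1}{2}\mathbf W_n^2\right)d\xx\ =\ \mathcal F(\v_n)+\mathcal L(\v_n),
\]
which is bounded along the subsequence. Uniform positive-definiteness of $\mathcal V_0$ gives $c\|\mathbb E(\v_n)-\tfrac{1}{2}\mathbf W_n^2\|_{L^2}^2$ bounded; combined with the $L^2$-boundedness of $\mathbb E(\v_n)$ and the fact that $\mathbf W_n^2$ is a constant matrix, this forces $|\mathbf W_n|$ to be bounded in $\M^{N\times N}$. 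Extract once more so that $\mathbf W_n\to \mathbf W$ in $\M^{N\times N}_{skew}$; then $\mathbf W_n^2\to \mathbf W^2$ (strongly, and a fortiori in $L^2(\Omega;\M^{N\times N}_{sym})$), so
\[
\mathbb E(\v_n)-\tfrac{1}{2}\mathbf W_n^2\ \rightharpoonup\ \mathbb E(\v)-\tfrac{1}{2}\mathbf W^2\quad\text{in }L^2(\Omega;\M^{N\times N}_{sym}).
\]

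Finally, since $\mathbf B\mapsto \int_\Omega \mathcal V_0(\xx,\mathbf B)\,d\xx$ is a convex, continuous functional on $L^2(\Omega;\M^{N\times N}_{sym})$, it is weakly sequentially lower semicontinuous, whence
\[
\liminf_{n\to\infty}\int_\Omega \mathcal V_0\!\left(\xx,\mathbb E(\v_n)-\tfrac{1}{2}\mathbf W_n^2\right)d\xx\ \ge\ \int_\Omega \mathcal V_0\!\left(\xx,\mathbb E(\v)-\tfrac{1}{2}\mathbf W^2\right)d\xx\ \ge\ \mathcal F(\v)+\mathcal L(\v),
\]
where the last inequality uses that $\mathbf W$ is only one admissible competitor in the minimization defining $\mathcal F(\v)$. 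Subtracting $\mathcal L(\v_n)\to \mathcal L(\v)$ yields $\liminf_n \mathcal F(\v_n)\ge \mathcal F(\v)$. The main subtlety is precisely the reduction step: since $\mathcal L(\v_n)$ depends on $\v_n$ and not merely on $\mathbb E(\v_n)$, the weak convergence of strains alone does not control the load term, and one must invoke the equilibrium hypothesis together with Korn's inequality to upgrade to $H^1$-weak convergence of (a translate of) $\v_n$.
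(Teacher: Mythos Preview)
Your proof is correct and follows essentially the same route as the paper's: reduce to $\mathbb P\v_n=0$ via the equilibrium condition \eqref{globalequiintro2}, use Korn--Poincar\'e to upgrade to weak $H^1$ convergence, extract a convergent subsequence of the optimal skew-symmetric matrices $\mathbf W_n$ by coercivity of $\mathcal V_0$, and conclude by weak lower semicontinuity of the convex quadratic integrand. Your handling of the identification of the weak $H^1$ limit with $\v$ up to a rigid motion is in fact slightly more explicit than the paper's.
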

\begin{proof} Let  $\v_{n},\v$ belong to $ H^1(\Omega;\R^N)$ and fulfil
$\mathbb E(\v_n)\!\wconv\!\mathbb E(\v)$ in $L^2(\Omega;\M^{N\times N})$. Then
$\mathbb E(\v_n)$ is bounded in $L^2(\Omega;\M^{N\times N})$. If $\liminf_{n\rightarrow +\infty} \mathcal F(\v_n)= +\infty$ then the claim is trivial, so we may also assume without restriction that  $\F(\v_{n})\le C$.
Assumption
\eqref{globalequiintro2} of equilibrated load entails $\mathcal F(\v_{n})=\mathcal F(\v_{n}-\mathbb P\v_{n})$,
so may suppose that $\mathbb P\v_{n}\equiv \mathbf 0$.
We choose
\begin{equation}\label{Wn}
\displaystyle\mathbf W_{n}\in \argmin \left\{\int_{\om}\mathcal V_0\Big( \xx,\, \mathbb E(\v_{n})-\textstyle\frac{1}{2}\mathbf W^{2}\Big)\, d\xx: \ \mathbf W\in \M^{N\times N}_{skew}\right\} \,.
\end{equation}

hence, 
if $C_K$ the Korn-Poincar\'e inequality in $\Omega$
and $\alpha>0$ is the uniform coercivity constant of $\mathcal V_0$, say $\mathcal V_0(\xx,\mathbf M)\geq \alpha|\mathbf M|^2$,
we get
\begin{equation}
\begin{array}{lll}\displaystyle
\alpha\int_{\om}|\mathbb E(\v_{n})-\textstyle \frac{1}{2}\mathbf W_{n}^{2}|^{2}\,d\xx
&\leq&
C+\mathcal L(\vv_n)\,=\,
C+\mathcal L(\v_n-\mathbb P\vv_n)\,\leq\, \vspace{0.1cm}\\
&\leq&
C+C_K(\|\mathbf f\|_{L^{2}(\partial\om)}+\|\mathbf g\|_{L^{2}(\om)})\|\mathbb E(\vv)\|_{L^2(\Omega;\M^{N\times N})}
\,,
\end{array}
\end{equation}
Therefore $|\mathbf W_{n}^2|$ is bounded and since $\mathbf W_{n}$ is real skew-symmetric
we obtain that
$|\mathbf W_{n}|$ is bounded too. So we may suppose that, up to subsequences, $\mathbf W_{n}\to \mathbf W$ in $ \M^{N \times N}_{skew}$. By taking into account that $\mathbb P\v_{n}\equiv \mathbf 0$ we get $\v_{n}\wconv \v$ in $H^{1}(\om, \mathbb R^{N})$ hence by recalling that $\mathcal V_{0}(\xx, \cdot)$ is a convex quadratic form
\begin{equation}\begin{array}{ll}
\displaystyle \liminf_{n\rightarrow +\infty} \mathcal F(\v_n)\! &=\ \,\liminf_{n\rightarrow +\infty}\int_{\om}\mathcal V_0\Big( \xx,\, \mathbb E(\v_{n})-\textstyle\frac{1}{2}\mathbf W^{2}_{n}\Big)\, d\xx- \mathcal L(\v_{n})\ge
\vspace{0.3cm}
\\
&\displaystyle \ge\ \int_{\om}\mathcal V_0\Big( \xx,\, \mathbb E(\v)-\textstyle\frac{1}{2}\mathbf W^{2}\Big)\, d\xx- \mathcal L(\v)\ge \mathcal F(\v)\\
\end{array}
\end{equation}
which proves the claimed lower semicontinuity inequality.
\end{proof}
\begin{remark}\label{variat}
{\rm
The first variation of $\F$ can be explicitly evaluated in the 2D case, thanks to \eqref{F=E-err}, as follows
\begin{equation}\label{firstvarF}
\begin{array}{cl}
\delta\mathcal F(\v)[\varphi] 
\displaystyle
=&\displaystyle
\!\!\!\!
 \int_{\om}\!
D\mathcal V_0\big(\xx,\mathbb E(\v)\big)\!\cdot\mathbb E(\varphi)
\,d\xx\,
\\
&\displaystyle
\hskip-1.1cm
+
\frac{1}{2}\!
\left (\int_{\om}\!\mathcal V_0(\mathbf I)d\xx\right )^{\!\!-1}\!\!\left (\int_{\om}
    D\mathcal V_0(\xx,\mathbf I)\!\cdot\!\mathbb E(\v)
\,d\xx\right)^{\!\!-} \!\!\!
\int_{\om}\!
   D \mathcal V_0(\xx,\mathbf I)\!\cdot\!\mathbb E(\varphi)
\,d\xx -\mathcal L(\varphi)=
\\
&\displaystyle
\hskip-1.5cm
=\,
\delta\mathcal E(\v)[\varphi]
\,+\,
\frac{1}{2}\!
\left (\int_{\om}\!\mathcal V_0(\mathbf I)d\xx\right )^{\!\!-1}\!\!\left (\int_{\om}
    D\mathcal V_0(\xx,\mathbf I)\!\cdot\!\mathbb E(\v)
\,d\xx\right)^{\!\!-} \!\!\!
\int_{\om}\!
   D \mathcal V_0(\xx,\mathbf I)\!\cdot\!\mathbb E(\varphi)
\,d\xx
\end{array}
\end{equation}
for every $\vv,\,\varphi\in H^{1}(\om;\mathbf R^{N})$.
}
\end{remark}
\begin{remark}\label{rmknonlocal}{\rm
Functional $\F$ exhibits a nonlocal behavior: precisely in 2D, due to the representations \eqref{F=E-err}
and \eqref{firstvarF}  respectively of the functional  of first variation, $\F(\v)$ is the sum
of a contribution $\mathcal E(\v)$ due to local functional $\mathcal E$ related to linear elasticity plus a possibly vanishing contribution with global dependance on $\v$
explicitly evaluated by
$$
 -\,\frac {a_*(\v)}4 \ =\
-\frac{1}{4}\left (\int_{\om}\mathcal V_0(\xx,\mathbf I)
d\xx\right )^{\!-1}\left [\left (\int_{\om}D\mathcal V_0(\xx,\mathbf I)\!\cdot\!\mathbb E(\v)
\,d\xx\right)^{\!-}\right ]^{2}\,,
$$
which simplifies as follows in the case of Green-Saint Venant energy:
$$ -\,\frac 1 {4\,|{\om}|}\left (\int_{\om}\hbox{\rm div}\,\v\,d\xx\right)^{\!-}\ ,$$
while the nonlocal coefficient $ \left (\int_{\om}
    D\mathcal V_0(\xx,\mathbf I)\!\cdot\!\mathbb E(\v)
\,d\xx\right)^{\!\!-} $ appears in Euler equations.}
\end{remark}


\section{Preliminary variational convergence results}
\label{sectionPrelRes}
In this Section 
we recall the main results of \cite{traction}
about the variational convergence of pure traction problems.
To this aim
basic notation and assumptions for general nonlinear energies is introduced first.
\\
Still we assume that the reference configuration of the elastic body is a
\begin{equation}\label{OMEGA}
\hbox{ bounded, connected open set } \Omega \subset \R^N \hbox{ with Lipschitz boundary, }  \ N= 2, 3,\quad
\end{equation}
and set these notations:
the generic point $\xx\in \Omega$ has components $x_j$ referring to the standard basis vectors
${\mathbf e}_j$ in $\R^N$;
${\mathcal L}^N$ and ${\mathcal B}^N$ denote respectively the
$\sigma\mbox{-algebras}$ of Lebesgue measurable and Borel measurable subsets of $\R^N$.\\
For every $\mathcal U:\om\times \M^{N \times N}\rightarrow \mathbb R,$ with $ \mathcal U(\xx,\cdot)\in C^{2}$ a.e. $\xx\in \om$,
we denote the gradient and the hessian of $g$ with respect to the second variable
by $D\mathcal U(\xx,\cdot)$ and $D^{2}\mathcal U(\xx,\cdot)$ respectively.
\\
For every displacements field $\vv\in H^1(\Omega;\R^N)$,  $\E(\vv):= \hbox{sym }   \!\nabla\vv$ denotes the infinitesimal strain tensor field,  $\mathcal R:=\{ \vv\in H^1(\Omega;\R^N): \E(\v)=\mathbf 0\}$ the set of infinitesimal rigid displacements and $\mathbb P\vv$ is the orthogonal projection of $\vv$ onto $\mathcal R$.
\vskip0.1cm
We consider a body made of an hyperelastic material, say
there exists a ${\mathcal L}^N\! \!\times\! {\mathcal B}^{N^2} $measurable
$\W : \om \times \M^{N \times N} \to [0, +\infty ]$
such that,  for a.e. $\xx \in \om$,
$\W(\xx,\nabla \mathbf y(\xx))$ represents the stored energy density, when $\mathbf y(x)$ is the deformation and $\nabla \mathbf y(\xx)$ is the deformation gradient.\\
Moreover we assume that for a.e. $\xx \in \om$
\beeq\lab{incom}  
	\W(\xx,\mathbf F)=+\infty \qquad \mbox{if $\det \mathbf F \leq 0$} \quad\hbox{(orientation preserving condition)}\,,
\eneq
\beeq \lab{framind} \W(\xx, \mathbf R\mathbf F)=\W(\xx, \mathbf F)\qquad \forall \, \mathbf R\!\in\! SO(N) \quad
\forall\, \mathbf F\in \M^{N \times N}\ \quad  \hbox{(frame indifference)}\,,
\eneq
\beeq\lab{reg} \exists\   \hbox{a neighborhood} \ \mathcal A  \  \hbox{of}\ SO(N) \hbox{ s.t.}
\quad\mathcal W(\xx,\cdot)\in C^{2}(\mathcal A)\,,
\eneq
\beeq \lab{coerc}
\exists\, C\!>\!0 \hbox{ independent of }\xx:\ 
\W(\xx,\mathbf F)\ge  C|\mathbf F^{T}\mathbf F-\mathbf I|^{2}\ \,\forall\, \mathbf F\!\in\! \M^{N \times N}\,   \hbox{(coerciveness)},
\eneq
\beeq \lab{Z1}
	\W(\xx,\mathbf I)=0\,, \quad D \W(\xx,\mathbf I)=0\,, \qquad \hbox{for a.e. }\xx \in \om  \,,
\eneq
that is  the reference configuration
has zero energy and
is stress free, so
by \eqref{framind}  we get also
$$\W(\xx,\mathbf R)\!=\!0,\quad D \W(\xx,\mathbf R)\!=\!0 \qquad \ \forall \,\mathbf R\in SO(N)\,.$$
By frame indifference there exists a  ${\mathcal L}^N \!\times\! {\mathcal B}^N \mbox{-measurable}$ $\V : \om \times \M^{N \times N} \to [0,+\infty]$ such that for every $\mathbf F\in \M^{N\times N}$
\beeq
\lab{WhatZ}
\W(\xx,\mathbf F)=\V(\xx,\textstyle{\frac{1}{2}}( \mathbf F^T \mathbf F - \mathbf I))
\eneq
and by \eqref{reg}
%
\beeq\lab{regV} \exists\   \hbox{a neighborhood} \ \mathcal O \  \hbox{of}\ \mathbf 0 \hbox{ such that}
\ \mathcal V(\xx,\cdot)\in C^{2}(\mathcal O), \hbox{ a.e. }x \in \om \,.
\eneq
In addition we assume that  there exists $\gamma>0$ independent of $\xx$ such that
\beeq \lab{Velliptic}
	\left|\,\mathbf B^T\,D^2 \mathcal V(\xx,\mathbf D) \,\mathbf B\,\right| \, \leq \, 2\, \gamma\, |\mathbf B|^2
	\quad
	\forall\, \mathbf D\!\in\! \mathcal O, \ \forall\, \mathbf B \!\in\! \M^{N \times N} .
\eneq


By (\ref{Z1}) and Taylor expansion with Lagrange reminder we get,
for a.e. $\xx \in \om$ and suitable $t \in (0,1)$ depending on $\xx$ and on $\mathbf B$:
\beeq \lab{12'}
	\mathcal V(\xx,\mathbf  B)\ =\ \frac{1}{2}\, \mathbf  B^{T} D^2\V (\xx, t \mathbf  B)\, \mathbf  B\,.
\eneq
Hence  by \eqref{Velliptic}
\beeq \lab{upbound}
	\mathcal V(\xx,\mathbf  B) \, \leq \, \gamma \,|\mathbf  B|^2
	\qquad \forall \ \mathbf  B\in \M^{N \times N}\cap  \mathcal O\,.
\eneq
According to \eqref{WhatZ}
for a.e.  $ \xx \!\in\! \om,\ h\!>\!0$ and every $ \mathbf  B\in \M^{N \times N} $ we set
\beeq \lab{Wh}
	\mathcal V_{h} (\xx,\mathbf B) := \frac{1}{h^2} \, \mathcal W(\xx,\Id+h \mathbf B) =
	\frac{1}{h^2} \, \mathcal V(\xx,h \, \hbox{\rm sym\,} \mathbf B +
	\textstyle{\frac{1}{2}}h^2 \mathbf B^T \mathbf B)\ .
\eneq
Taylor's formula with \eqref{Z1},\eqref{Wh} entails
$\mathcal V_h(\xx, \mathbf B)= \frac 1 2 \, (\hbox{\rm sym\,} \mathbf B)\,
D^2\mathcal V(\xx,\mathbf 0) \,
 (\hbox{\rm sym\,} \mathbf B) + o(1)$, so
\begin{equation}\lab{Vh}
 \mathcal V_h(\xx, \mathbf B)\ \to\ \mathcal V_0 (\xx, \hbox{\rm sym } \mathbf B) \hbox{ as } h\to 0_+\,,
 \end{equation}
where the point-wise limit of integrands is the quadratic form $\mathcal V_0$ defined by
 \begin{equation}\label{A e Q}
    \mathcal V_0 (\xx,\mathbf B)
    \ :=\ \frac 1 2 \,\mathbf B^TD^2\mathcal V(\xx,\mathbf 0) 
  \,\mathbf B\,
  \quad \hbox{a.e. }\,\xx\in\Omega,\ \mathbf B\in \M^{N\times N}\,.
  \end{equation}
The symmetric fourth order tensor $D^2\mathcal V(\xx,\mathbf 0)$ in \eqref{A e Q} plays the role of the classical elasticity tensor.
\\
By 
\eqref{coerc} we get
\begin{equation}\label{(2.18)}
  \mathcal V_{h} (x,\mathbf B)\ =\ \frac{1}{h^2} \, \mathcal W(x,\Id+h \mathbf B)\ \ge\
  C\,|\,2\,{\rm sym } \mathbf B\,+\,h\,\mathbf B^T \mathbf B\,|^{2}
\end{equation}
so that \eqref{A e Q} and \eqref{(2.18)} imply the ellipticity of $\mathcal V_0$\,:
\beeq \lab{Z3}
	\mathcal V_0 (\xx,\hbox{\rm sym}\,\mathbf B)
	\, \geq \, 4\,C \, |\hbox{\rm sym}\,\mathbf  B|^2
	\qquad 
\ \hbox{a.e. }\,\xx\in\Omega,\ \mathbf B\in \M^{N\times N}\,.
\eneq
%
For a suitable choice of the adimensional parameter $h>0$, the functional representing the total energy is labeled by $\mathcal F_h: H^1(\Omega;\R^N)\to \R\cup\{+\infty\} $ and defined as follows
\begin{equation}
\label{nonlinear}
\displaystyle \mathcal F_h(\v)\ :=\
\int_\Omega \mathcal V_{h}(\xx, \nabla\v)\, d\xx\,-\,\mathcal L(\v)\,,
\end{equation}
where $\mathcal L$ is defined by \eqref{globalequiintro1}.\\
%
In order to describe the  asymptotic behavior as $h\to 0_+$
of functionals $\mathcal F_h$,
we
refer to the limit energy functional $\mathcal F: H^1(\Omega;\R^N)\to \R$ defined by \eqref{DTfuncintro}.
\\ In this Section we assume
\eqref{OMEGA} 
together with the \textsl{standard structural conditions} \eqref{incom}-\eqref{Z1},\eqref{Velliptic} 
as usual in scientific literature concerning elasticity theory
and we refer to the notations
\eqref{WhatZ},\eqref{Wh},\eqref{A e Q},\eqref{nonlinear}.
\begin{definition}\label{minimseq}
Given an infinitesimal sequence $h_j$  of positive real numbers, we say that $\v_j\in H^1(\Omega;\R^N)$ is a \textsl{minimizing sequence} of the sequence of functionals $\mathcal F_{h_{j}}$
if $$(\mathcal F_{h_{j}}(\vv_j)-\inf\mathcal F_{h_j})\to 0 \quad \hbox{ as } \quad h_j\to 0_+ \,.$$\end{definition}
We proved  
that for every given infinitesimal
sequence $h_j$ actually the minimizing sequences of the sequence of functionals $\mathcal F_{h_j}$
exists. For reader's convenience
we recall here the main results of \cite{traction}: see Lemma 3.1, Theorem 2.2, Theorem 4.1 and Corollary 4.2 therein.
\begin{lemma}\lab{infFh} Assume
the \textsl{standard structural conditions} together with 
\eqref{globalequiintro2} and 
\eqref{compintro}.
\\ Then there is a constant $K$, dependent only on $\Omega$ and the coercivity constant of of the stored energy density appearing in \eqref{coerc}, such that
\begin{equation}\label{infFhdisplay}
  \displaystyle\inf_{h>0}\,\inf_{\vv\in H^1} \ \F_{h}(\vv)\ > \ -\, K\,\big(\|\mathbf f\|_{L^2}^2+\|\mathbf g\|_{L^2}^2\big)\,.
\end{equation}
\end{lemma}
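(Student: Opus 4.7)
The plan is to combine the coercivity \eqref{coerc}, the Friesecke--James--M\"uller rigidity estimate, the Euler--Rodrigues formula for rotations, and the compatibility condition \eqref{compintro}, in order to isolate the ``bad'' rotational contribution to $\mathcal L(\vv)$ and show it has the favorable sign. Fix $\vv\in H^1(\Omega;\R^N)$ and $h>0$, and set $E_h(\vv):=\int_\Omega \mathcal V_h(\xx,\nabla\vv)\,d\xx$. Since \eqref{incom} and \eqref{coerc} give $\mathcal W(\xx,\mathbf F)\ge C\,|\mathbf F^T\mathbf F-\mathbf I|^{2}\ge C\,\mathrm{dist}^{2}(\mathbf F,SO(N))$ whenever $\det \mathbf F>0$, the rigidity lemma of \cite{FJM} applied to $\mathbf y_h(\xx)=\xx+h\vv(\xx)$ produces a constant $\mathbf R_h\in SO(N)$ (depending on $\vv$ and $h$) satisfying
\begin{equation*}
\int_\Omega\bigl|\nabla\vv-h^{-1}(\mathbf R_h-\mathbf I)\bigr|^{2}\,d\xx\,\le\,\frac{C_\Omega}{C}\,E_h(\vv),
\end{equation*}
with $C_\Omega$ the FJM rigidity constant of $\Omega$.

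The Euler--Rodrigues formula allows one to write
\begin{equation*}
\mathbf R_h-\mathbf I\,=\,\sin\theta_h\,\mathbf W_h\,+\,(1-\cos\theta_h)\,\mathbf W_h^{2},\qquad \mathbf W_h\in \M^{N\times N}_{skew},\ \theta_h\in[0,\pi],
\end{equation*}
in both dimensions $N=2,3$. Set $\uu_h:=\vv-h^{-1}(\mathbf R_h-\mathbf I)\xx-\zz_h$, where $\zz_h\in \mathcal R$ is the infinitesimal rigid displacement making $\mathbb P\uu_h=\mathbf 0$. Since $\zz_h$ and $\sin\theta_h\,\mathbf W_h\xx$ are infinitesimal rigid displacements, \eqref{globalequiintro2} gives $\mathcal L(\zz_h)=\mathcal L(\sin\theta_h\,\mathbf W_h\xx)=0$, hence
\begin{equation*}
\mathcal L(\vv)\,=\,\mathcal L(\uu_h)\,+\,h^{-1}(1-\cos\theta_h)\,\mathcal L(\mathbf W_h^{2}\xx).
\end{equation*}
Here the compatibility condition \eqref{compintro} yields $\mathcal L(\mathbf W_h^{2}\xx)\le 0$ and $(1-\cos\theta_h)\ge 0$, so $-\mathcal L(\vv)\ge -\mathcal L(\uu_h)$.

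To conclude, taking the symmetric part in the rigidity estimate gives $\|\mathbb E(\uu_h)\|_{L^{2}}\le \sqrt{C_\Omega/C}\,\sqrt{E_h(\vv)}$. Since $\mathbb P\uu_h=\mathbf 0$, Korn--Poincar\'e combined with the trace inequality furnishes
\begin{equation*}
|\mathcal L(\uu_h)|\,\le\,K_1\bigl(\|\mathbf f\|_{L^{2}(\partial\Omega)}+\|\mathbf g\|_{L^{2}(\Omega)}\bigr)\,\sqrt{E_h(\vv)},
\end{equation*}
for some constant $K_1$ depending only on $\Omega$ and $C$. Setting $M:=\|\mathbf f\|_{L^{2}(\partial\Omega)}+\|\mathbf g\|_{L^{2}(\Omega)}$, one obtains
\begin{equation*}
\F_h(\vv)\,=\,E_h(\vv)-\mathcal L(\vv)\,\ge\, E_h(\vv)-K_1 M\sqrt{E_h(\vv)}\,\ge\, -\frac{K_1^{2}M^{2}}{4}\,\ge\,-K\bigl(\|\mathbf f\|_{L^2}^{2}+\|\mathbf g\|_{L^2}^{2}\bigr),
\end{equation*}
which is \eqref{infFhdisplay}. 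The main obstacle is aligning the geometric rigidity approximation with an algebraic splitting of rotations so that only the $\mathbf W^{2}$-direction is tested against the load: this is exactly what the Euler--Rodrigues formula guarantees, since its symmetric part $(1-\cos\theta_h)\mathbf W_h^{2}$ is precisely the piece on which \eqref{compintro} applies, while the skew part $\sin\theta_h\,\mathbf W_h$ is killed by equilibrium.
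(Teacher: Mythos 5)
Your argument is correct: the combination of the FJM rigidity estimate applied to $\mathbf y_h=\xx+h\vv$, the Euler--Rodrigues splitting of $\mathbf R_h-\mathbf I$ into a skew part (killed by \eqref{globalequiintro2}) and a $\mathbf W_h^2$ part (nonpositive against the load by \eqref{compintro}), followed by Korn--Poincar\'e and minimization of $t^2-K_1Mt$, is exactly the mechanism the paper attributes to the proof of this lemma in \cite{traction}. This is essentially the same approach as the original proof, and you even correctly use only the weak form $\mathcal L(\mathbf W^2\xx)\le 0$ of the compatibility condition, consistent with the paper's remark that the uniform bound survives under \eqref{wcompbis}.
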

\begin{theorem}
\label{mainth1}
Assume that
the \textsl{standard structural conditions} and
\eqref{globalequiintro2},
\eqref{compintro}
hold true.
Then:
\beeq\lab{equalmin}
 \min_{\v\in H^1(\Omega;\R^N)} \mathcal F(\v)\ =\ \min_{\w\in H^1(\Omega;\R^N)}\mathcal E(\w) \,,
\eneq
\begin{equation}
\label{equivmin}
\argmin_{\v\in H^1(\Omega;\R^N)}  \mathcal F\ =\ \argmin_{\v\in H^1(\Omega;\R^N)}{\mathcal E}\,;
\end{equation}
for every sequence of strictly positive real numbers $h_{j}\to 0$
there are
\textsl{minimizing sequences} of the sequence of functionals $\mathcal F_{h_{j}}$;\\
for every minimizing sequence $\v_j\in H^1(\Omega;\R^N)$
of $\mathcal F_{h_{j}}$ there exist
a subsequence and a displacement
$\v_0\in H^1(\Omega;\R^N)$
 such that, without relabeling,
\begin{equation}
\label{wstr}
\mathbb E(\v_j)\ \wconv\ \mathbb E(\v_{0})\qquad
 weakly \ in \ L^2(\Omega;\M^{N\times N})\,,\quad
\end{equation}
\begin{equation}\label{convsqrth}
\sqrt{h_{j}}\ \nabla\v_j\ \rightarrow\
\mathbf 0
\qquad strongly\ in \ L^2(\Omega;\M^{N \times N})\,,
\end{equation}
\begin{equation}
\label{min}
\lim_{j\to +\infty}\mathcal F_{h_{j}}(\v_j)\ \ =\
\min_{\v\in H^1(\Omega;\R^N)} \! \mathcal F(\v)\ \ =\ \mathcal F(\v_0)\ \ =\ \mathcal E(\v_0)
\,.\qquad
\end{equation}
%
%
\end{theorem}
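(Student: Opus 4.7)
I would first prove \eqref{equalmin} and \eqref{equivmin} by a direct rewriting of $\F$. For every skew $\mathbf W$ the field $\tfrac12\mathbf W^2\xx$ has symmetric gradient $\tfrac12\mathbf W^2$, so the substitution $\v\mapsto \v-\tfrac12\mathbf W^2\xx$ in the inner integrand of \eqref{DTfuncintro}, combined with the linearity of $\mathcal L$, gives
\begin{equation*}
\F(\v)\,=\,\min_{\mathbf W\in\M^{N\times N}_{skew}} \Big(\,\mathcal E\big(\v-\tfrac12\mathbf W^2\xx\big)\,-\,\mathcal L\big(\tfrac12\mathbf W^2\xx\big)\,\Big).
\end{equation*}
The equilibration \eqref{globalequiintro2} makes $\inf_\v \mathcal E(\v-\tfrac12\mathbf W^2\xx)=\min\mathcal E$ independently of $\mathbf W$, while the compatibility \eqref{compintro} makes $-\mathcal L(\tfrac12\mathbf W^2\xx)>0$ for every $\mathbf W\neq\mathbf 0$. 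The joint infimum in $(\v,\mathbf W)$ is therefore attained exactly at $\mathbf W=\mathbf 0$ and any $\v\in\argmin\mathcal E$, yielding \eqref{equalmin} and \eqref{equivmin}.

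Next I would analyze the minimizing sequences. Their existence is immediate from the definition, and \eqref{globalequiintro2} makes $\F_{h_j}$ invariant under the addition of any infinitesimal rigid displacement $\z\in\mathcal R$, so one may replace $\v_j$ by $\v_j-\mathbb P\v_j$. Lemma~\ref{infFh}, the coercivity \eqref{coerc} and the Korn--Poincar\'e inequality yield a uniform $L^2$ bound on $(\Id+h_j\nabla\v_j)^T(\Id+h_j\nabla\v_j)-\Id$, whence $\mathrm{dist}(\Id+h_j\nabla\v_j,SO(N))=O(h_j)$ in $L^2$. Applying the Friesecke--James--M\"uller Rigidity Lemma \cite{FJM} to the deformation $\yy_j=\xx+h_j\v_j$ produces constant rotations $\mathbf R_j\in SO(N)$ with $\|\Id+h_j\nabla\v_j-\mathbf R_j\|_{L^2}=O(h_j)$. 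Parameterizing $\mathbf R_j$ by the Euler--Rodrigues formula through constant skew matrices $\mathbf A_j$, and using the expansion $\mathbf R_j=\Id+\mathbf A_j+\tfrac12\mathbf A_j^2+o(|\mathbf A_j|^2)$, one extracts a subsequence along which $\sqrt{h_j}\nabla\v_j\to\mathbf W$ strongly in $L^2$ for some constant skew $\mathbf W\in\M^{N\times N}_{skew}$, while $\mathbb E(\v_j)\wconv\mathbb E(\v_0)$ weakly in $L^2$ for a limit $\v_0\in H^1(\Omega;\R^N)$.

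Using Taylor's expansion \eqref{Wh} of $\mathcal V_{h_j}$ and the weak-$L^2$ lower semicontinuity of the convex integrand $\mathcal V_0$, one then obtains
\begin{equation*}
\liminf_{j\to\infty}\int_\Omega \mathcal V_{h_j}(\xx,\nabla\v_j)\,d\xx \,\geq\, \int_\Omega \mathcal V_0\big(\xx,\mathbb E(\v_0)-\tfrac12\mathbf W^2\big)\,d\xx.
\end{equation*}
Together with $\mathcal L(\v_j)\to\mathcal L(\v_0)$ (from weak $H^1$ convergence of $\v_j-\mathbb P\v_j$), this yields $\liminf\F_{h_j}(\v_j)\geq\F(\v_0)\geq\min\F=\min\mathcal E$. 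A matching $\limsup$ is produced by testing $\F_{h_j}$ on a fixed $\v^\ast\in\argmin\mathcal E$ and invoking \eqref{Wh} once more. Equality throughout forces $\mathbf W=\mathbf 0$ by the first paragraph, which simultaneously delivers \eqref{wstr}, \eqref{convsqrth} and \eqref{min}.

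The technical heart of the argument is the rigidity step: weak $L^2$ convergence of strains does not control the skew part of $\nabla\v_j$, while the nonlinear energy penalizes strains quadratically yet ignores rotations to leading order. The quantitative rigidity of \cite{FJM} combined with the Euler--Rodrigues parameterization reconciles these features, revealing a $-\tfrac12\mathbf W^2$ shift in the limiting strain that would be invisible to a naive $H^1$-weak compactness argument; this is precisely why the variational limit is $\F$ and not $\mathcal E$. The compatibility condition \eqref{compintro} then rules out the spurious skew contribution at the minimum level and restores the classical linearized elasticity value.
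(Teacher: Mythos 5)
Your first paragraph is sound: the rewriting $\F(\v)=\min_{\mathbf W}\big(\mathcal E(\v-\tfrac12\mathbf W^2\xx)-\mathcal L(\tfrac12\mathbf W^2\xx)\big)$ is exactly the identity \eqref{minFvsminF} of Remark \ref{controsegno}, and combined with \eqref{globalequiintro2} and the strict sign in \eqref{compintro} it does yield \eqref{equalmin} and \eqref{equivmin}. The compactness part, however, contains a genuine error and a genuine gap. The error: \eqref{globalequiintro2} does \emph{not} make $\F_{h_j}$ invariant under subtraction of $\mathbb P\v_j$. Only the load term $\mathcal L$ is invariant; the stored energy is frame indifferent under left multiplication by rotations \eqref{framind}, not under addition of a skew linear field, so in general $\F_{h}(\v)\neq\F_{h}(\v-\mathbb P\v)$. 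This is precisely the obstruction emphasized in the Introduction, and the whole point of Section \ref{section strong conv} is that recovering an asymptotic version of this replacement requires a separate argument and is carried out only for the Green--St.\ Venant energy. You may legitimately use the true identity $\mathcal L(\v_j)=\mathcal L(\v_j-\mathbb P\v_j)$ together with the Korn--Poincar\'e inequality to estimate the load term, but you cannot replace the minimizing sequence by its projection-free part.

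The gap: you extract a subsequence with $\sqrt{h_j}\,\nabla\v_j\to\mathbf W$ and $\mathbb E(\v_j)\wconv\mathbb E(\v_0)$ \emph{before} ever using \eqref{compintro}, reserving the compatibility condition for the final identification ``at the minimum level''. This order cannot work, because the compactness itself fails without \eqref{compintro}: with $\mathbf f=\mathbf g=\mathbf 0$ and $\W$ as in \eqref{WquadIntrod} (Remark \ref{unbound}), the sequence $\v_j=h_j^{-1}(\tfrac12\mathbf W^2+\tfrac{\sqrt3}{2}\mathbf W)\xx$ is minimizing, yet $\mathbb E(\v_j)=\tfrac1{2h_j}\mathbf W^2$ has no weakly convergent subsequence and the rotation produced by the rigidity lemma stays at distance of order $1$ from the identity. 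The energy bound plus coercivity \eqref{coerc} and Korn--Poincar\'e only give the non-closable estimate $\|\mathbb E(\v_j)+\tfrac12 h_j\nabla\v_j^T\nabla\v_j\|_{L^2}^2\leq C\big(1+\|\mathbb E(\v_j)\|_{L^2}\big)$. The missing step is quantitative: writing the rigidity rotation via Euler--Rodrigues with angle $\theta_j$ and axis $\mathbf U_j$, one must show that $\theta_j^2/h_j\to\infty$ would force $\mathbb E(\v_j)$ to blow up along the direction $\tfrac12\mathbf U_j^2$, whence the strict inequality \eqref{compintro} makes $-\mathcal L(\v_j)\to+\infty$ and hence $\F_{h_j}(\v_j)\to+\infty$, contradicting minimality. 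Only after this does one get $\theta_j=O(\sqrt{h_j})$, boundedness of $\mathbb E(\v_j)$, and the conclusions \eqref{wstr}--\eqref{min}. Without this step your argument proves nothing that distinguishes the compatible from the incompatible case.
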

If strong inequality in the compatibility condition 
\eqref{compintro}
is replaced by a weak inequality, then the uniform estimate \eqref{infFhdisplay} still hold true and also minimizing sequences of the sequence of functionals $\mathcal F_{h_j}$ exist for every infinitesimal
sequence $h_j$, but the minimizers coincidence \eqref{equivmin} for $\mathcal F $ and $\mathcal E$ cannot hold anymore. Nevertheless the following general result holds true.
\begin{proposition} \lab{exinfmin}If the structural assumptions together with 
\eqref{globalequiintro2} are fulfilled, but 
\eqref{compintro}
is replaced by
\begin{equation}
\lab{wcompbis} \mathcal L( \mathbf W ^2 \xx
)\le 0\qquad \forall\, \mathbf W\in \M^{N\times N}_{skew}
\end{equation}
then $\argmin \mathcal F$ is still nonempty and
\beeq\lab{eqmin}
\min\F=\min\mathcal E\,,
\eneq
but the coincidence of minimizers sets is replaced by the inclusion
\begin{equation}\lab{incl}
\argmin\mathcal E\subset\argmin\F\,.
\end{equation}
If \eqref{wcompbis} holds true and there exists $\mathbf U\in \M^{N\times N}_{skew},\ \mathbf U\neq \mathbf 0$ such that $\mathcal L(\mathbf U^2 \xx)=0$,
 then $\F$ admits infinitely many minimizers which are not minimizers of $\mathcal E$, precisely
\begin{equation}\label{argmin=}
\argmin \mathcal E\ \,\mathop{\subset}_{\neq} \ \, \argmin \mathcal E \, +\,  \left\{ \,\mathbf U^2\xx\, : \  \mathbf U \in \mathcal M^{N\times N}_{skew},\ \mathcal L(\mathbf U^2\xx)=0 \right\}
\ \,\mathop{\subset} \ \,\argmin \mathcal F \,,
\end{equation}
where the last inclusion is an equality in 2D:
\begin{equation}\label{argmin==}
\argmin \mathcal E\ \mathop{\subset}_{\neq} \ \argmin \mathcal E \, +\,  \left\{ \,-\, t\,\xx\, : \  t\geq 0 \right\}
\ = \ \argmin \mathcal F \,,\qquad \hbox{ if } N=2\,.
\end{equation}
\end{proposition}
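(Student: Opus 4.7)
My plan rests on the change of variables $\tilde\vv(\xx) := \vv(\xx) - \tfrac12 \mathbf W^2 \xx$ for $\mathbf W\in \Nskew$: since $\mathbf W^2$ is symmetric, $\mathbb E(\tilde\vv) = \mathbb E(\vv) - \tfrac12 \mathbf W^2$, whence the algebraic identity
$$
\int_\Omega \mathcal V_0\bigl(\xx, \mathbb E(\vv) - \tfrac12 \mathbf W^2\bigr)\,d\xx \,-\, \mathcal L(\vv) \;=\; \mathcal E(\tilde\vv) \,-\, \tfrac12 \mathcal L(\mathbf W^2 \xx).
$$
First I would deduce \eqref{eqmin} and \eqref{incl} from this: under \eqref{wcompbis}, the term $-\tfrac12 \mathcal L(\mathbf W^2\xx)$ is nonnegative, so each quantity inside the $\min_{\mathbf W}$ defining $\mathcal F(\vv)$ is at least $\min \mathcal E$, giving $\mathcal F(\vv) \ge \min \mathcal E$; combined with the trivial $\mathcal F \le \mathcal E$ (choice $\mathbf W = \mathbf 0$), this yields $\min \mathcal F = \min \mathcal E$. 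Any $\vv_0 \in \argmin \mathcal E$ (which exists since the loads are equilibrated) then satisfies $\min \mathcal E = \min \mathcal F \le \mathcal F(\vv_0) \le \mathcal E(\vv_0) = \min \mathcal E$, hence $\vv_0 \in \argmin \mathcal F$, and in particular $\argmin \mathcal F \neq \emptyset$.

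To establish the middle set of \eqref{argmin=}, given $\vv_0 \in \argmin \mathcal E$ and $\mathbf U \in \Nskew \setminus \{\mathbf 0\}$ with $\mathcal L(\mathbf U^2 \xx) = 0$, I test $\vv := \vv_0 + \mathbf U^2 \xx$ in the $\min_{\mathbf W}$ with the choice $\mathbf W := \sqrt 2\,\mathbf U$: this gives $\mathbb E(\vv) - \tfrac12\mathbf W^2 = \mathbb E(\vv_0)$ and $\mathcal L(\vv) = \mathcal L(\vv_0)$, so $\mathcal F(\vv) \le \mathcal E(\vv_0) = \min \mathcal F$, whence $\vv \in \argmin \mathcal F$. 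Strictness of the first inclusion in \eqref{argmin=} follows because strict convexity of $\int \mathcal V_0(\xx,\mathbb E(\cdot))$ modulo $\mathcal R$ forces $\argmin \mathcal E = \vv_0 + \mathcal R$, while $\mathbf U^2\xx\not\in\mathcal R$: indeed $\mathbf U^2 = -\mathbf U^T\mathbf U$ is a nonzero negative semidefinite symmetric matrix whenever $\mathbf U\neq \mathbf 0$, so $\mathbb E(\mathbf U^2\xx) = \mathbf U^2 \neq \mathbf 0$.

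The main obstacle is the reverse inclusion $\argmin \mathcal F \subset \argmin \mathcal E + \{-t\xx : t \ge 0\}$ that yields the equality in \eqref{argmin==} when $N=2$. I begin by noting that every skew $2\times 2$ matrix $\mathbf U$ satisfies $\mathbf U^2 = -a^2 \mathbf I$, so the hypothesis of existence of $\mathbf U \neq \mathbf 0$ with $\mathcal L(\mathbf U^2 \xx) = 0$ forces $\mathcal L(\xx) = 0$. For $\vv \in \argmin \mathcal F$, write $\vv = \vv_0 + \vv_1$ with $\vv_0 \in \argmin \mathcal E$; the Euler-Lagrange equation for $\mathcal E$ at $\vv_0$ tested against $\xx$, together with $\mathcal L(\xx) = 0$, gives $\int_\Omega D\mathcal V_0(\xx,\mathbf I)\cdot \mathbb E(\vv_0)\,d\xx = 0$. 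Using this and the quadratic expansion $\mathcal E(\vv) = \mathcal E(\vv_0) + \int_\Omega \mathcal V_0(\xx, \mathbb E(\vv_1))\,d\xx$ (the linear term vanishes by minimality of $\vv_0$), the representation \eqref{F=E-err} reduces the requirement $\mathcal F(\vv) = \min \mathcal F$ to
$$
4\Bigl(\int_\Omega \mathcal V_0(\xx, \mathbf I)\,d\xx\Bigr) \int_\Omega \mathcal V_0(\xx, \mathbb E(\vv_1))\,d\xx \;=\; \bigl[\phi(\vv_1)^-\bigr]^2,
$$
where $\phi(\vv_1) := \int_\Omega D\mathcal V_0(\xx,\mathbf I)\cdot \mathbb E(\vv_1)\,d\xx$. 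The key observation is that this is precisely the equality case of Cauchy-Schwarz applied to the positive-definite inner product $\langle \mathbf A, \mathbf B\rangle := \int_\Omega D^2\mathcal V(\xx,\mathbf 0)\mathbf A \cdot \mathbf B\,d\xx$ on $L^2(\Omega;\Nsym)$, with the additional sign constraint $\phi(\vv_1) \le 0$. Positive-definiteness of $D^2\mathcal V(\xx,\mathbf 0)$ on $\Nsym$ forces the equality case to be $\mathbb E(\vv_1) = \lambda \mathbf I$ a.e.\ for a single scalar $\lambda$, so $\vv_1 - \lambda \xx \in \mathcal R$, and the sign constraint $\phi(\lambda\mathbf I) = 2\lambda \int_\Omega \mathcal V_0(\xx,\mathbf I)\,d\xx \le 0$ forces $\lambda \le 0$. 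Setting $t := -\lambda \ge 0$ and absorbing the rigid part into $\vv_0$ yields the desired representation.
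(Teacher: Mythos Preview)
The paper does not supply its own proof of this proposition: it is stated in Section~\ref{sectionPrelRes} among the results recalled from \cite{traction} (see the sentence preceding Lemma~\ref{infFh}), so there is no in-paper argument to compare against. Your proof is nonetheless complete and correct. The change of variables $\tilde\vv=\vv-\tfrac12\mathbf W^2\xx$ yields the identity you state, from which \eqref{eqmin}, \eqref{incl} and the inclusions in \eqref{argmin=} follow directly under \eqref{wcompbis}. For the 2D equality in \eqref{argmin==}, your reduction via \eqref{F=E-err} to the Cauchy--Schwarz equality case (plus the sign constraint $\phi(\vv_1)\le 0$) for the inner product $\langle\mathbf A,\mathbf B\rangle=\int_\Omega D^2\mathcal V(\xx,\mathbf 0)\mathbf A\cdot\mathbf B\,d\xx$ on $L^2(\Omega;\Nsym)$ is exactly right and forces $\mathbb E(\vv_1)=\lambda\mathbf I$ with $\lambda\le 0$. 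One small completeness remark: when $\phi(\vv_1)>0$ the equation reads $\int_\Omega\mathcal V_0(\xx,\mathbb E(\vv_1))\,d\xx=0$, whence $\vv_1\in\mathcal R$ directly by \eqref{Z3}, covering the $\lambda=0$ case without Cauchy--Schwarz.
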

\begin{remark}\lab{unbound}
{\rm The compatibility condition 
\eqref{compintro}
cannot be dropped in Theorem \ref{mainth1}
even if the (necessary) condition 
\eqref{globalequiintro2} holds true. Moreover plain substitution of strong with weak inequality
in
\eqref{compintro}
leads to a lack of compactness for minimizing sequences.
\vskip0.1cm
Indeed, if $\n$ denotes the outer unit normal vector to $\partial\om$ and we choose
$\mathbf f=f\n$ with $f<0$, $\mathbf g\equiv 0$ then
\beeq
\int_{\partial\Omega}\mathbf f\cdot\mathbf W^{2}\,\x\,d\H^{N-1}= 2f(\Tr\mathbf W^{2})\vert\Omega\vert >0
\eneq
and the strict inequality in
\eqref{compintro}
is reversed in a strong sense by any $\mathbf W\!\in\! \M^{N\times N}_{skew}\setminus \{\mathbf 0\}$;\\
fix a sequence of positive real numbers such that $h_{j}\!\to\! 0$ 
$\mathbf W\!\in\! \M^{N\times N}_{skew},\  \mathbf W\not\equiv \mathbf 0$,
and set $\v_j={h_{j}}^{-1}(\frac{1}{2}\mathbf W^2+\frac{\sqrt 3}{2}\mathbf W)\,\xx\,$,
then $\mathbf I +\big(\frac 1 2 \mathbf W^2+\frac {\sqrt 3}{2}\mathbf W\big)\in SO(N) $ and
\begin{equation}
\mathcal F_{h_{j}}(\v_j)=-\frac{f}{2h_{j}}\int_{\partial \Omega}\mathbf W^{2}\x\cdot\n\,d\H^{n-1}=-\frac{f}{2h_{j}}(\Tr\mathbf W^{2})|\Omega|\rightarrow -\infty.
\end{equation}
On the other hand, assume \eqref{OMEGA},
$\W$ as in \eqref{WquadIntrod} and
$\mathbf f\!=\!\mathbf g\!\equiv\! \mathbf 0$, so that the compatibility inequality is susbstituted by the weak inequality;
%
if $\v_j$ are defined as above then,
hence by frame indifference,
 \begin{equation}
\F_{h_{j}}(\v_j)=0=\inf \F_{h_{j}}
\end{equation}
but $\mathbb E(v_j)$ has no weakly convergent subsequences in $L^2(\Omega;\M^{N\times N})$.
}
\end{remark}
\begin{remark} \label{tensatbdry}\rm It is worth noticing that the compatibility condition 
\eqref{compintro}
holds true
when $\mathbf g\equiv 0$, $\mathbf f=f\n$ with $f>0$ and $\n$ the outer unit normal vector to $\partial\om$.\\
Indeed let $\mathbf W\in \M^{N\times N}_{skew}, \mathbf W\not\equiv \mathbf 0$:
hence by 
\eqref{globalequiintro2} and the Divergence Theorem we get
\beeq
\int_{\partial\Omega}\mathbf f\cdot\mathbf W^{2}\,\x\,d\H^{N-1}= 2f(\Tr\mathbf W^{2})\vert\Omega\vert < 0
\eneq
thus proving 
\eqref{compintro}
 in this case.
%
This means that in presence of tension-like surface forces and of null body forces the compatibility condition holds true.
\end{remark}
It is quite natural to ask whether condition 
\eqref{compintro}, which was essential in the proof of Theorem \ref{mainth1}, may be dropped in order to obtain at least existence of $\min \mathcal F$: the answer is negative.\\
Indeed the next remark shows that, when compatibility inequality in 
\eqref{compintro} is reversed
for at least one choice of the skew-symmetric matrix $\mathbf W$, then $\mathcal F$ is unbounded from below.
\begin{remark}\lab{controsegno} If
{\rm 
\beeq\lab{controsegnoeq}
\exists \,\mathbf W_*\in  \M^{N\times N}_{skew}\, :\qquad
\mathcal L(\z_{\mathbf W_*})>0\,, \quad \hbox{where }\z_{\mathbf W_*}=\frac  1 2 \mathbf W_*^2 \xx\,,
\eneq
 then\vskip-0.3cm
\beeq\lab{menoinf}
\inf_{\v\in H^1(\Omega;\R^N)} \mathcal F(\v)\ =\ -\infty.
\eneq
Indeed
we get
\begin{equation}\label{minFvsminF}
  \inf_{ H^1(\Omega;\R^N)} \mathcal F \ =\ \min_{H^1(\Omega;\R^N)} \mathcal E \ \,-
  \sup_{\mathbf W\in \M^{N\times N}_{skew}}
\mathcal L(\z_{\mathbf W}) \qquad \hbox{where }\ \z_{\mathbf W}=\frac 1 2  \mathbf W^2\x\,.
\end{equation}
Hence\vskip-0.3cm
\begin{equation*}
  \inf_{ H^1(\Omega;\R^N)} \mathcal F
  \ \leq \ \min_{H^1(\Omega;\R^N)} \mathcal  E \ \,-  \, \tau\mathcal L(\,\z_{\mathbf W_*}) \qquad \forall \,\tau>0\,,
\end{equation*}
which entails \eqref{menoinf}.}
\end{remark}%
Next example shows that in case of uniform compression along the whole boundary functional $\mathcal F$ is unbounded from below, regardless of convexity or nonconvexity of $\Omega$ and $\mathcal F$.
\begin{example} \lab{nocrit}
{\rm  Assume $\om\subset \R^N$ is a Lipschitz, connected open set$,\  N=2,3,\ \mathbf g\equiv \mathbf 0,\ \mathbf f= -\mathbf n,\ $ where $\n$ denotes the outer unit normal vector  to $\partial\om$.\\
Then \eqref{controsegnoeq} holds true hence, by Remark \ref{controsegno},
$\inf_{\v\in H^1(\Omega;\R^N)} \mathcal F(\v)\ =\ -\infty$. \vskip0.1cm Indeed, for every $ \mathbf W\in
\mathcal M^{N\times N}_{skew}$ such that $|\mathbf W|^2=2$ we obtain}
\\
$$
\int_{\partial \Omega} \!\mathbf f \cdot \mathbf W^2 \xx\, d\mathcal H ^{N-1}\, = \,
- \! \int_{\partial \Omega} \!\mathbf n \cdot \mathbf W^2 \xx \, d\mathcal H ^{N-1}\,= \,
- \!\int_{ \Omega} \dv  (\mathbf W^2 \xx) \,d\xx\,=\, -\, |\Omega|\,\tr\,  \mathbf W^2\,=\, 2\,|\Omega|>0\,.
$$
\end{example}
Summarizing, only two cases are allowed: either $\min \mathcal F=\min \mathcal E$ or $\inf \F=-\infty$:
the second case actually arises in presence of compressive surface load.
\\
The new functional $\mathcal F$ somehow preserves memory of instabilities which are typical of finite elasticity, while they disappear in the linearized model described by $\mathcal E$.
In the light of Theorem \ref{mainth1},
as far as pure traction problems are considered, it seems reasonable that the range of validity of linear elasticity should be restricted to a certain class
of external loads, explicitly those verifying \eqref{compintro}: a remarkable example in such class is a uniform normal tension load at the boundary as in Remark \eqref{tensatbdry};
while
in the other cases equilibria of a linearly elastic body could be better described through critical points of $\F$, whose existence in general seems to be an interesting and open problem.

\section{Strong convergence of minimizing sequences of $\mathcal F_h$}\label{section strong conv}
In this section we prove that for the special class of Green-Saint Venant energy density it is possible to choose a subsequence of functionals $\mathcal F_h$ defined by \eqref{nonlinear} and a corresponding minimizing sequence, according to Definition \eqref{minimseq}, which is weakly converging in $H^{1}(\om;\mathbf R^{N})$  to a minimizer
of $\mathcal F$ defined by \eqref{DTfuncintro}.
Moreover, thanks to a result of \cite{DMPN}, this convergence entails strong convergence in $W^{1,q} (\om;\R^N)$ for
$1\leq q <2$.\\
Before stating the main result of this section we notice that, by frame indifference \eqref{framind} and equilibrated load condition \eqref{globalequiintro2}, without loss of of generality we can assume
\begin{equation}\lab{bar1}
\int_{\om}x_{i}\,d\xx\ =\ 0 \quad\, \forall\ i=1 \ldots N
\,,\qquad
\int_{\om}x_{i}\,x_{j}\,d\xx\ =\ 0 \quad \forall\ i, j=1 \ldots N,\ i\not= j.\\
\end{equation}
Therefore, if $\displaystyle I_{k}$
denotes the moment of inertia of $\om$ with respect to the $k$-th axis, by \eqref{bar1} we get
\begin{equation}\lab{proj}\mathbb P\,\u(\x)\,=\,{\bf a}\times \x,\quad
\displaystyle {\bf a}_{k}\,=\,I_{k}^{-1}\int_{\om}(\x\times\vv)_{k}\,d\xx
\end{equation}
so
\begin{equation}\label{nablaP}
(\nabla\,\mathbb P\,\u(\x))_{k}\ =\ {\bf a}\times {\bf e}_{k}.
\end{equation}

\begin{theorem}
\lab{strongconvmin}
 Let $\mu,\ \lambda>0$,
\begin{equation}\label{(6.1)}
\W(\xx,\mathbf F) \,=\, \W(\mathbf F)\,:=\
\left\{\begin{array}{ll}
 \mu |\mathbf F^{T}\mathbf F-\mathbf I|^{2}+\frac{\lambda}{2}|\ \hbox{\rm Tr}\ (\mathbf F^{T}\mathbf F-\mathbf I)|^{2}\  &\hbox{ if} \ \det \mathbf F>0,\\
 &\\
 +\infty\ &\hbox{ else,}\\
 \end{array}\right.
\end{equation}
assume  
\eqref{globalequiintro2}, 
\eqref{compintro}
and let let $h_{j}$ be a
sequence of strictly positive real numbers with $h_{j}\to 0$.\\ 
Then there exists a (not relabeled) subsequence of functionals $\mathcal F_{h_{j}}$ and a minimizing sequence $\w_j$   weakly converging  in $H^{1}(\om;\mathbf R^{N})$ and strongly converging
in $W^{1,q} (\om,\R^N)$ to $\w_{0}\in \argmin \mathcal E$, for $1\leq q <2$.
\end{theorem}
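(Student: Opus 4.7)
The plan is to begin with the minimizing sequence produced by Theorem~\ref{mainth1}, eliminate the rigid ambiguity by subtracting $\mathbb P\v_j$, and then exploit the special algebraic structure of the Green--Saint Venant density~\eqref{(6.1)} to show that this subtraction does not alter the energy in the limit. Theorem~\ref{mainth1} supplies a minimizing sequence $\{\v_j\}\subset H^1(\Omega;\R^N)$ and (along a not relabeled subsequence) a displacement $\v_0\in\argmin\mathcal E$ such that $\mathbb E(\v_j)\wconv\mathbb E(\v_0)$ in $L^2$, $\sqrt{h_j}\,\nabla\v_j\to\mathbf 0$ in $L^2$, and $\mathcal F_{h_j}(\v_j)\to\min\mathcal F=\mathcal E(\v_0)$. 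Set
\[\w_j\,:=\,\v_j-\mathbb P\v_j\,,\qquad \mathbf W_j\,:=\,\nabla(\mathbb P\v_j)\in\Nskew\,.\]
Since $\mathbb E(\w_j)=\mathbb E(\v_j)$ is bounded in $L^2$ and $\mathbb P\w_j=\mathbf 0$, the Korn--Poincar\'e inequality provides a uniform $H^1$-bound on $\{\w_j\}$; extract a further (not relabeled) subsequence with $\w_j\wconv\w_0$ weakly in $H^1(\Omega;\R^N)$, so that $\mathbb E(\w_0)=\mathbb E(\v_0)$ and $\mathbb P\w_0=\mathbf 0$. From $\|\nabla\w_j\|_{L^2}\leq C$ one gets $\sqrt{h_j}\,\nabla\w_j\to\mathbf 0$ in $L^2$, and the identity $\sqrt{h_j}(\nabla\v_j-\nabla\w_j)=\sqrt{h_j}\,\mathbf W_j$ (a constant matrix in $\xx$) yields the key quantitative bound $h_j\,|\mathbf W_j|^2\to 0$.

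The heart of the proof is to show $\mathcal F_{h_j}(\w_j)=\mathcal F_{h_j}(\v_j)+o(1)$, so that $\{\w_j\}$ is again a minimizing sequence. Because $\mathbf W_j\xx\in\mathcal R$, the equilibration assumption~\eqref{globalequiintro2} yields $\mathcal L(\w_j)=\mathcal L(\v_j)$, and the claim reduces to the purely energetic estimate $\int_\Omega[\mathcal V_{h_j}(\nabla\w_j)-\mathcal V_{h_j}(\nabla\v_j)]\,d\xx=o(1)$. Writing $\mathbf B:=\nabla\w_j$ and $\mathbf W:=\mathbf W_j$, the identities $\sym\mathbf W=\mathbf 0$ together with
\[(\mathbf B+\mathbf W)^T(\mathbf B+\mathbf W)=\mathbf B^T\mathbf B+(\mathbf B^T\mathbf W-\mathbf W\mathbf B)-\mathbf W^2\,,\qquad |\mathbf B+\mathbf W|^2=|\mathbf B|^2+2\,\mathbf B\!\cdot\!\mathbf W+|\mathbf W|^2\,,\]
allow us to expand the squares defining $\mathcal V_{h_j}$ in~\eqref{(6.1)} into four groups of terms: (i) purely $\mathbf W_j$-dependent, of the form $h_j^{a}|\mathbf W_j|^{b}$ with $a\geq 1$ and $2a\geq b$, hence $o(1)$ by $h_j|\mathbf W_j|^2\to 0$; (ii) linear in $\nabla\w_j$ and in $\mathbf W_j$ with an $h_j$ factor, controlled by Cauchy--Schwarz from $\|\nabla\w_j\|_{L^2}\leq C$ and $\sqrt{h_j}\,|\mathbf W_j|\to 0$; (iii) quadratic in $\nabla\w_j$ with an $h_j$ factor, of order $h_j\|\nabla\w_j\|_{L^2}^2\to 0$; and (iv) cubic in $\nabla\w_j$ with an $h_j^2$ factor, absorbed via the quartic a priori bound $h_j^{\,2}\int_\Omega|\nabla\v_j|^4\,d\xx\leq C$ extracted from $\int_\Omega\mathcal V_{h_j}(\nabla\v_j)\,d\xx\leq C$ and the coercive quartic part of~\eqref{(6.1)} (which then yields $\|\nabla\w_j\|_{L^4}=O(h_j^{-1/2})$ by interpolation). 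Each of these four contributions vanishes as $j\to\infty$, whence $\mathcal F_{h_j}(\w_j)\to\min\mathcal F$.

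Since $\{\w_j\}$ is minimizing and $\w_j\wconv\w_0$ in $H^1$, the $\Gamma$-liminf inequality underlying Theorem~\ref{mainth1} gives $\mathcal F(\w_0)\leq\liminf_j\mathcal F_{h_j}(\w_j)=\min\mathcal F$, so $\w_0\in\argmin\mathcal F=\argmin\mathcal E$. The promotion to strong $W^{1,q}$ convergence with $1\leq q<2$ is then obtained through the Rigidity Lemma of~\cite{FJM}: the uniform estimate $\int_\Omega\mathrm{dist}^2(\mathbf I+h_j\nabla\w_j,SO(N))\,d\xx\leq C h_j^2$ (a direct consequence of the coercivity in~\eqref{(6.1)}) implies, as in~\cite{DMPN}, pointwise a.e. convergence along a further subsequence of $\nabla\w_j$ to $\nabla\w_0$; combined with the uniform $L^2$-bound on $\{\nabla\w_j\}$ and Vitali's convergence theorem, this yields $\nabla\w_j\to\nabla\w_0$ in $L^q(\Omega;\M^{N\times N})$ for every $1\leq q<2$, while $\w_j\to\w_0$ in $L^q(\Omega;\R^N)$ is automatic by Rellich--Kondrachov.

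The essential obstacle is the energetic estimate of the second paragraph. The bound $|\mathbf W_j|=o(h_j^{-1/2})$ permits $\mathbf W_j$ itself to be large, so the cross terms produced by the decomposition $\nabla\v_j=\nabla\w_j+\mathbf W_j$ are not a priori negligible, and the cubic ones of type (iv) in particular require the sharp quartic coercivity of the Green--Saint Venant density in order to be absorbed. The explicit polynomial dependence of $\mathcal W$ on $\mathbf F^T\mathbf F-\mathbf I$ is precisely what makes every expansion term controllable by the available bounds; this is also the reason why this strong convergence statement is confined to the Green--Saint Venant class and is not formulated for a general nonlinear stored energy.
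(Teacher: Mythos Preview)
Your overall scheme matches the paper's exactly: start from the minimizing sequence of Theorem~\ref{mainth1}, set $\w_j=\v_j-\mathbb P\v_j$, deduce an $H^1$ bound on $\w_j$ via Korn--Poincar\'e, show that $\w_j$ is still minimizing, and then invoke Proposition~5.3 of \cite{DMPN} for the upgrade to strong $W^{1,q}$ convergence. Your derivation of $h_j|\mathbf W_j|^2\to 0$ (from $\sqrt{h_j}\nabla\v_j\to\mathbf 0$ and $\sqrt{h_j}\nabla\w_j\to\mathbf 0$) is in fact a bit more direct than the paper's route through \eqref{bar1}--\eqref{nablaP}.

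The genuine difference lies in the energetic step. The paper observes that for the density \eqref{(6.1)} one has $\mathcal V_{h}(\mathbf B)=\mathcal V_0\big(\sym\mathbf B+\tfrac{h}{2}\mathbf B^T\mathbf B\big)$, writes the argument for $\w_j$ as $\mathbf D_{h_j}+\mathbf B_{h_j}$ with $\mathbf D_{h_j}=\mathbb E(\v_j)+\tfrac{h_j}{2}\nabla\v_j^T\nabla\v_j$ bounded in $L^2$ and $\mathbf B_{h_j}\to\mathbf 0$ in $L^2$, and then uses the \emph{convexity} of $\mathcal V_0$ to obtain the one--sided bound
\[
\mathcal F_{h_j}(\v_j)-\mathcal F_{h_j}(\w_j)\ \ge\ \int_\Omega \mathbf B_{h_j}\cdot D\mathcal V_0(\mathbf D_{h_j}+\mathbf B_{h_j})\,d\xx\ =\ o(1),
\]
which already suffices to make $\w_j$ minimizing. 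This argument runs entirely on $L^2$ estimates: no quartic control is needed. By contrast, your direct expansion produces cubic cross terms of type $h_j^2\,|\mathbf W_j|\!\int|\nabla\w_j|^3$, and you correctly note that absorbing them forces the extraction of the quartic bound $h_j^2\!\int|\nabla\v_j|^4\le C$ from the coercivity of \eqref{(6.1)}. Both routes are valid for Green--Saint~Venant; the convexity route is shorter and, as the paper records in Remark~\ref{rmk5.2}, extends immediately to any $\mathcal W$ that is a convex function of $\mathbf F^T\mathbf F-\mathbf I$ with quadratic growth, whereas your polynomial expansion is tied to the specific quadratic form of $\mathcal V_0$. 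A minor point: what you call ``interpolation'' for $\|\nabla\w_j\|_{L^4}=O(h_j^{-1/2})$ is really the triangle inequality $\nabla\w_j=\nabla\v_j-\mathbf W_j$ combined with $h_j^2\!\int|\nabla\v_j|^4\le C$ and $|\mathbf W_j|=o(h_j^{-1/2})$.
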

\begin{proof} By recalling Proposition 5.3 of \cite{DMPN} it will be enough  to  show that there exists
a minimizing sequence $\w_j$ for functionals $\mathcal F_{h_{j}}$
(say $\mathcal F_{h_{j}}(\w_j)=
\inf \mathcal F_{h_{j}}  + o(1)$) weakly converging in $H^{1}(\om;\mathbf R^{N})$ to $\w_{0}\in \argmin \mathcal F$ and
\begin{equation}\lab{energconv}
\displaystyle\lim_{{h_{j}}\to 0}\mathcal F_{h_{j}}(\v_j)= \, \displaystyle \int_{\om} \mathcal V_{0}\big(\mathbb E(\v_0)\big)\,d\xx-\mathcal L(\v_{0})= \,
\mathcal E(\v_{0})
\end{equation}
where it is worth noticing that due to \eqref{(6.1)}
\begin{equation}
\mathcal V_{0}(\xx,\mathbf B)\equiv \mathcal V_{0}(\mathbf B)=4\mu|\mathbf B|^{2}+2\lambda|\ \hbox{Tr}\ \mathbf B|^{2}.
\end{equation}
To this aim let $\v_j$ be a minimizing sequence for functionals $\mathcal F_{h_{j}}$: by Theorem~\ref{mainth1}
 there exist a (not relabeled) subsequence $\{h_{j}\}$ and $\v_j,\  \v_0\in H^1(\Omega;\R^N)$ such that
\begin{equation}
\label{wstr2}
\mathbb E(\v_j)\ \wconv\ \mathbb E(\v_{0})\:\:
\hbox{\rm in }\: L^2(\Omega;\M^{N\times N}),
\end{equation}
\begin{equation}
\label{min2}
\mathcal F(\v_0)=\min_{\v\in H^1(\Omega;\R^N)} \mathcal F(\v)=\lim_{{h_{j}}\to 0}\mathcal F_{h_{j}}(\v_j),
\end{equation}
\begin{equation}\label{convsqrth2}
\sqrt{h_{j}}\,\nabla\v_j\ \to \ \mathbf 0\quad\hbox{in} \quad L^2(\Omega;\M^{N\times N})
\end{equation}
and by \eqref{min2}, \eqref{convsqrth2}
\begin{equation}\label{convminbis}\begin{array}{ll}
&\displaystyle\min_{\v\in H^1(\Omega;\R^N)} \mathcal E(\v)\ge \mathcal F(\v_0)=\min_{\v\in H^1(\Omega;\R^N)} \mathcal F(\v)=\lim_{{h_{j}}\to 0}\mathcal F_{h_{j}}(\v_j)=\\
&\\
&\displaystyle\lim_{{h_{j}}\to 0}\int_{\om} \mathcal V_{0}\big( \mathbb E(\v_j) +\textstyle\frac{1}{2} h_j\nabla  \v_j^{T}  \nabla  \v_j\big)\,d\xx-\mathcal L(\v_j)=\\
&\\
&\displaystyle \int_{\om} \mathcal V_{0}\big(\mathbb E(\v_0)\big)\,d\xx-\mathcal L(\v_{0})=\mathcal E(\v_{0})
\end{array}
\end{equation}
that is $\v_{0}\in \argmin \mathcal E$.\\
Thanks to \eqref{bar1}, \eqref{proj} and \eqref{nablaP} we get
$$\displaystyle \int_{\om}(\x\times \v_{h_{j}})\,d\x= \int_{\om}(\x\times (\v_{h_{j}}-\frac{1}{|\om|}\int_{\om}\v_{h_{j}}\,d\x)\,d\x$$
which, thanks to \eqref{convsqrth2}, implies
\begin{equation}
\sqrt{h_{j}}\,\nabla(\mathbb P\v_j)\to \mathbf 0
\end{equation}
so that
\begin{equation}\lab{Bhj}
\mathbf B_{h_{j}}:= \frac{h}{2}\left\{\nabla(\mathbb P\v_j)^{T}\nabla(\mathbb P\v_j)+
 \nabla \v_j^{T}\nabla(\mathbb P\v_j)-\nabla(\mathbb P\v_j)^{T}\nabla \v_j\right\}\to \mathbf 0
\end{equation}
strongly in $L^2(\Omega;\M^{N\times N})$.
Since $\v_j$ is a minimizing sequence,  \eqref{coerc} and Poincar\'{e}-Korn inequality yield
\begin{equation}
\lab{stimsoprasotto}
\begin{array}{ll}
&\displaystyle\int_\Omega\vert \mathbb E(\v_j)+\textstyle\frac{1}{2}h_{j}\nabla\v_j^T\nabla\v_j\vert^2\,d\xx\le C+
\mathcal L(\v_j)=\\
&\\
 &\displaystyle C+\mathcal L(\v_j-\mathbb P\v_j)\le  C+C'\left(\int_\Omega\vert \mathbb E(\v_j)\vert^2\,d\xx\right)^{\frac{1}{2}},\\
\end{array}
\end{equation}
hence $\mathbf D_{h_{j}}:= \mathbb E(\v_j)+\textstyle\frac{1}{2}h_{j}\nabla\v_j^T\nabla\v_j$ are equibounded in $L^2(\Omega;\M^{N\times N})$
and by setting $\w_j:=\v_j-\mathbb P\v_j$, by recalling that $\mathbf B\to V_{0}(\mathbf B)$ is convex we have
\begin{equation}
\mathcal F_{h_{j}}(\v_j)-\mathcal F_{h_{j}}(\w_j)\ge \int_{\om}\mathbf B_{h_{j}}\cdot V_{0}'(\mathbf D_{h_{j}}+\mathbf B_{h_{j}})\,d\xx.
\end{equation}
Since $|V_{0}'(\mathbf B)|\le C|\mathbf B|$ for some $C> 0$, by \eqref{Bhj} and  \eqref{stimsoprasotto} we get
\begin{equation}
\left |\displaystyle \int_{\om}\mathbf B_{h_{j}}\cdot V_{0}'(\mathbf D_{h_{j}}+\mathbf B_{h_{j}})\,d\xx\right | \le C \int_{\om}\left (|\mathbf B_{h_{j}}|^{2}+\left |\mathbf B_{h_{j}}\right | \left | \mathbf D_{h_{j}}\right |\right )\,d\xx\to 0
\end{equation}
that is
\begin{equation}
\mathcal F_{h_{j}}(\v_j)\ge \mathcal F_{h_{j}}(\w_j)+o(1)
\end{equation}
which proves that $\w_j$ is a minimizing sequence too.  It is now readily seen that  $\w_j$ are equibounded in $H^{1}(\om;\mathbf R^{N})$ and \eqref{energconv} follows from  \eqref{convminbis} so the claim is  proven.
\end{proof}
\begin{remark}\label{rmk5.2}
\rm{By inspection of the proof, Theorem \ref{strongconvmin} holds true also
for more general energies:
e.g if $\mathcal W$ is a convex function of $\mathbf F^{T}\mathbf F-\mathbf I$ with quadratic growth,
which is finite if and only if $\det \mathbf F>0$.
}
\end{remark}

\end{document}